\newtheorem{theorem}{Theorem} 
\newtheorem{lemma}{Lemma}
\newtheorem{proposition}{Proposition}
\newtheorem{remark}{Remark}
\newtheorem*{proof}{Proof}
\newcommand{\diff}[1]{\text{d}{#1}}
\begin{document}

\begin{frontmatter}

\title{Aggregate Power Control of Heterogeneous TCL Populations Governed by Fokker-Planck Equations} 

\author{Jun Zheng$^{1}$}\ead{zhengjun@swjtu.edu.cn},
\author{Gabriel Laparra$^{2}$}\ead{gabriel.laparra@polymtl.ca},
\author{Guchuan~Zhu$^{2}$}\ead{guchuan.zhu@polymtl.ca},
\author{Meng~Lia$^{3}$}\ead{lmbuaa@gmail.com}

\address{$^{1}${School of Mathematics, Southwest Jiaotong University,
        Chengdu, Sichuan, P. R. of China 611756}\\
        $^{2}$Department of Electrical Engineering, Polytechnique Montr\'{e}al,  P.~O. Box 6079, Station Centre-Ville, Montreal, QC, Canada H3T 1J4 \\
        $^{3}$School of Information Science and Technology, Zhejiang Sci-Tech University, Zhejiang, China}

\begin{keyword}                            
Fokker-Planck equation; thermostatically controlled loads; aggregate load control; well-posedness, stability \\\\
\end{keyword}                              
\begin{abstract}                           
This paper addresses the modeling and control of heterogeneous populations of thermostatically controlled loads (TCLs) operated by model predictive control (MPC) schemes at level of each TCL. It is shown that the dynamics of such TCLs populations can be described by a pair of Fokker-Planck equations coupled via the actions in the domain. The technique of input-output feedback linearization is used in the design of aggregate power control, which leads to a nonlinear system in closed loop. Well-posedness analysis is carried out to validate the developed control scheme. The closed-loop stability of the system is assessed by a rigorous analysis. A simulation study is contacted, and the obtained results confirm the validity and the effectiveness of the proposed approach.
\end{abstract}

\end{frontmatter}
%
\section{Introduction}\label{Sec: Introduction}
A{ggregates} of large populations of thermostatically controlled loads (TCLs) can be managed to offer auxiliary services, such as frequency control, load following, and energy balancing, which can contribute to maintaining the overall stability of power networks \cite{Callaway:2009,Ledva2018Managing,Liu:2016,Liu2016MPC}. TCLs can also provide a means for absorbing the fluctuations of renewable energy generated by, e.g., wind turbines and solar photovoltaic plants \cite{Callaway:2009,Mahdavi:2017}. Moreover, due to the fact that most of the TCLs, including space heaters, air conditioners, hot water tanks, and refrigerators, exhibit flexibilities in power demand for their operation and elasticities in terms of performance restrictions, they are considered to be one of the most important Demand Response (DR) resources that can provide such features as power peak shaving and valley filling and enable dynamic pricing schemes in the context of the Smart Grid \cite{Angeli:2012,brandstetter2017hierarchical,collins2019distributed,gu2017online,ma2016_IEEE_TSG,perez2016integrated,Qi2017,sadid2017discrete,tindemans2015decentralized,Totu:2017,7000567,Zhang2013Aggregated}. Indeed, control of aggregated TCL populations is a long-time standing problem, which continues to attract much attention in the recent literature {\cite{Liu:2016,Kizilkale2019mfg,Voice:2018,Zhao:2017}}.

The aim of this paper is to develop strategies for aggregate power control of heterogeneous TCL populations {}{based on continuum models} described by partial differential equations (PDEs).  The PDE aggregate model of TCL populations was originally introduced in \cite{Malhame:1985}. Under the assumption of homogenous TCL populations with all the TCLs modeled by thermostat-controlled scalar stochastic differential equations, the dynamics of a population are expressed by two coupled Fokker-Planck equations describing the evolution of the distribution of the TCLs in ON and OFF states, respectively, over a rang of temperature. In about a quarter century later, this PDE aggregate model has been elaborated in the work~\cite{Callaway:2009} to incorporate control actions to manipulate the aggregate of TCL populations, such as the total power consumption. This approach is adopted later by, e.g., {\cite{Angeli:2012,tindemans2015decentralized,Totu:2017,Beeker2016fpe}} for different DR applications. A more generic stochastic hybrid system model applicable to a wider class of responsive loads is developed in~\cite{Zhao:2018}. PDE aggregate models can also be derived in the framework of deterministic systems. Inspired by the concept of fluid-flow dynamics, a semi-linear transport PDE has been derived in \cite{Bashash2013} for homogeneous TCL populations. It is proposed later in \cite{moura2013modeling} that this PDE model can be extended to describe heterogeneous TCL populations by adding a diffusive term, which indeed results in the same type of PDEs as the one developed in the framework of stochastic systems, i.e., a pair of coupled Fokker-Planck equations. Moreover, a heterogenous population with high diversity can be divided into a finite number of subsets, each of which represents a population with limited variation in its heterogeneity~\cite{Ghaffari:2015}.

Another widely adopted model for describing the dynamics of TCL populations is the one originally proposed in \cite{Koch2011Modeling}. In this model, a  temperature deadband is divided into a finite number of segments, called state bins, and the evolution of the distribution of TCLs over the temperature under deadband control is presented by a linear system for which the state transactions can be derived based on Markov chains. This model has been further extended to TCLs of more generic dynamics, e.g., higher order systems, with different control mechanisms for different applications {\cite{Ledva2018Managing,Liu:2016,Liu2016MPC,Zhang2013Aggregated,Soudjani2015,Hao2015Aggregated}}.

In this work, we consider the TCL populations for which the individual TCLs are operated under model predictive control (MPC), which is one of the most frequently adopted techniques in practice due to its ability to handle constraints, time varying processes, delays, and uncertainties, as well as disturbances. It should be noted that as with MPC schemes, TCL switching may occur arbitrarily over the time at any point inside a range of temperature. Consequently, the state-bin model based on the transaction of Markov chains is not applicable. {}{For this reason and being inspired by the work reported in the recent literature, we concentrate on PDE-based aggregate modeling and control of large TCL populations in this paper.}

The approach adopted in most of the work on PDE-based aggregate power control of TCL populations is based on the technique of \emph{early-lumping} with which the control design is carried out on first discretizing the PDE models over the space {\cite{Callaway:2009,Totu:2017,Bashash2013,Ghaffari:2015,le2016pde}}. There are few exceptions, such as the control scheme for output regulation of TCL populations proposed in \cite{Ghanavati:2018}, which are based on the approach of \emph{late-lumping} and hence, the original PDE model is used in control design without approximations. One of the main advantages of this approach is that it can preserve the basic property of the original system, in particular the closed-loop stability, when the designed control is applied \cite{Balas:1978}.

The control design presented in this work adopts the approach of \emph{late-lumping}, in which the original PDE model is used. Specifically, the technique of input-output linearization {}{(see, e.g., \cite{Christofides1996,Christofides2001,Maidi:2014}), which is an extension of the same technique for finite dimensional nonlinear system control \cite{Khalil:2002,Krstic:1995,Isidori:1995,Levine:2009}}, is used in the design of aggregate power (the output) regulation of a TCL population governed by a pair of coupled Fokker-Planck equations. It should be pointed out that this method is greatly inspired by the work presented in \cite{Maidi:2014} while being different in that the control design is carried out directly with the original Fokker-Planck equations. Consequently, a guarantee on closed-loop stability is of primary importance from both theoretical and practical perspectives. To assess the validity of the developed control scheme, an analysis on the well-posedness and the stability of the closed-loop system {}{with nonlinear nonlocal terms is performed. It is worth noting that to the best of our knowledge, the result obtained in this paper is novel for PDE-based aggregate power control of large TCL populations. Particularly, the results on the well-posedness and the stability of Fokker-Planck equations having nonlinear nonlocal terms distributed both in the domain and on the boundary and involving integration in the denominator of nonlocal terms are novel.}

{}{The main contributions of this paper include:}
\begin{itemize}
\item {}{incorporating MPC-based control schemes at the TCL level into an aggregate model of large TCL populations described by a pair of coupled Fokker-Planck equations;}
\item {}{developing an aggregated power control scheme based on the technique of input-output linearization for PDE systems;}
\item {}{assessing the well-posedness of the solutions of the closed-loop system with nonlinear nonlocal terms by the technique of iteration and the generalization of the approach developed in \cite{Ladyzenskaja:1968};}
\item {}{conducting a rigorous stability analysis of the closed-loop system composed of finite dimensional input-output dynamics and infinite dimensional internal dynamics.}
\end{itemize}

In the rest of this paper, Section~\ref{Sec: Modeling} presents the dynamical model of individual TCLs under MPC and the derivation of aggregate model of homogeneous and heterogeneous TCLs populations. Control design is given in Section~\ref{Sec: Control design}. Wellposedness
assessment {}{by the technique of iteration} and stability analysis are detailed in
Section \ref{Sec: Well-posedness} and Section \ref{Sec: Stability Analysis}, respectively. A simulation study to evaluate the behavior of the closed-loop system with the developed control scheme is presented in Section~\ref{Sec: Simulation Study}. {}{Some} concluding remarks are provided in Section~\ref{Sec: Conclusion}. Finally, details on some technical development are presented in {}{the appendices}.

\textbf{Notations:} If there is no ambiguity, we use the notations {$\dfrac{\partial ^m u}{\partial x^m}$, $D^m_xu$}, or $u_{\underbrace{x\cdots x}_{m~\text{times}}}$, or $u^{(m)}$ to denote the $m$th-order derivative (or weak derivative) of a function $u$ w.r.t. its argument $x$, where $m$ is a nonnegative integer.

For $T\in (0,+\infty)$, let $Q_T=(\underline{x},\overline{x})\times(0,T)$ with $ \overline{Q}_T=[\underline{x},\overline{x}]\times[0,T]$. {{}Denote $Q_\infty=(\underline{x},\overline{x})\times(0,\infty)$ and} $\overline{Q}_\infty=[\underline{x},\overline{x}]\times[0,\infty)$. Let $l$ be a nonintegral positive number. The spaces $\mathcal {H}^{l}([\underline{x},\overline{x}])$ with the norm $|\cdot |_{(\underline{x},\overline{x})}^{(l)}$ and $\mathcal {H}^{l,\frac{l}{2}}(\overline{Q}_T)$ with the norm $|\cdot |_{Q_T}^{(l)}$ are defined in {{} Appendix~\ref{Appendix: notation}}.

For $T\in (0,+\infty]$, the space $L^{\infty}((0,T);L^\infty(\underline{x},\overline{x}))$ consists of all strongly measurable functions $u:{}{(0,T)}\rightarrow L^\infty(\underline{x},\overline{x})$ with the norm
\begin{align*}
\|u\|_{\infty} {}{:=}\|u\|_{L^{\infty}((0,T);L^\infty(\underline{x},\overline{x}))} {}{:=}\text{vrai}\sup\limits_{0< s<T}\|u(\cdot,s)\|_{L^{\infty}(\underline{x},\overline{x})}{}{:=}\text{vrai}\sup\limits_{0< s<T}\text{vrai}\sup\limits_{\underline{x}< x< \overline{x}}|u(x,s)|<+\infty.
\end{align*}

Finally, for simplicity, we denote by $\|u\|_p$ the norm $ \|u\|_{L^p(\underline{x}, \overline{x})}$ for a function $u\in L^{p}(\underline{x}, \overline{x})$, where $1\leq p\leq +\infty$. Particularly, we use $\|u\|$ to denote $\|u\|_2$.

\section{Modeling Aggregate of TCL Populations}\label{Sec: Modeling}
\subsection{Dynamics and Control of Individual TCLs}\label{Sec: Individual Dynamics of TCLs}
We consider a large population of thermostatically controlled loads (TCLs) operated in ON/OFF mode. Denote by $x_i$ the temperature of the $i$-th load in a population of size $N$. {}{The dynamics of $x_i$ are described by a first-order ordinary differential equation (ODE):}
\begin{equation}\label{eq: thermal dynamics}
  \dfrac{\text{d}x_i}{\text{d}t} = \dfrac{1}{R_i C_i}\left(x_{ie}-x_i + s_iu_iR_iP_i\right), \ i=1,\ldots,N,
\end{equation}
where $x_{ie}$ is the external temperature, $R_i$ is thermal resistance, $C_i$ is thermal capacitance, $P_i$ is thermal power, {}{$s_i$ is defined as
\begin{equation*}
  s_i =
  \begin{cases}
    1, & \hbox{heating system;} \\
    -1, & \hbox{cooling system,}
  \end{cases}
\end{equation*}
and the switching signal $u_i$:
\begin{equation*}
  u_i =
  \begin{cases}
    1, & \hbox{ON;} \\
    0, & \hbox{OFF,}
  \end{cases}
\end{equation*}
is generated by an MPC scheme specified below. Note that this hybrid dynamic model is widely adopted in the literature, while deadband-based switching is the most used control scheme in different works (see, e.g., \cite{Callaway:2009,Mahdavi:2017,Angeli:2012,tindemans2015decentralized,Totu:2017,Bashash2013,moura2013modeling,Ghaffari:2015}).}

The model used in MPC design is a discrete-time system that can be derived from~\eqref{eq: thermal dynamics} by using the standard scheme of zero-order hold in a sampling period $T_{s}$, which takes the form:
\begin{equation}\label{Eq: Time discrete}
x_i(k+1)=A_i x_i(k) + B_i u_i(k) + E_i x_{ie}(k),
\end{equation}
where the system parameters are given by $ A_i=\text{e}^{-\frac{1}{R_iC_i}T_{s}}$, $B_i= s_i\frac{P_i}{C_i}\int_{0}^{Ts} \text{e}^{-\frac{1}{R_iC_i}\tau}\text{d}\tau$, and $E_i= \frac{1}{R_iC_i}\int_{0}^{Ts} \text{e}^{-\frac{1}{R_iC_i}\tau}\text{d}\tau$.


Let $x_{\text{ref}}$ be the reference signal. Denote by $e_i(k) = x_i(k)-x_{\text{ref}}(k)$ the tracking error. Then for an output regulation problem corresponding to the $i$-th subsystem the MPC design is an optimization problem expressed by \cite{saad2018discrete}:
\begin{subequations}\label{eq: MPC}
\begin{align}
  &\min_{u_i(1),\ldots,u_i(M)} \sum\limits_{k=1}^{M} \left(\|e_i(k)\|^2_{Q_{\text{mpc}}} + \|u_i(k)\|^2_{R_{\text{mpc}}}\right), \label{eq: MPC obj}\\
  \text{s.t.:} \ \ & x_i(k+1)=A_i x_i(k) + B_i u_i(k) + E_i x_{ie}(k), \label{eq: MPC constrant1}\\
                   & {}{x_i(t) \rightarrow x_i(0), \ u_i(t) \rightarrow u_i(0), \ t > 0,} \label{eq: MPC starting point}\\
      & u_i (k)   \in \{0,1\}, \label{eq: MPC constrant2}\\
      & x_{\text{\text{ref}}}(k)-\underline{\Delta x} \le  x_i (k) \le x_{\text{\text{ref}}}(k)+ \overline{\Delta x},\label{eq: MPC constrant3}
\end{align}
\end{subequations}
where $M$ is the prediction horizon, $Q_{\text{mpc}}$ and $R_{\text{mpc}}$ are, respectively, the weighting factors applied to penalize the tracking errors and the control efforts, and $\underline{\Delta x}$ and $\overline{\Delta x}$ are, respectively, the lower and upper temperature bounds. Switching sequences are determined by the solution of the above problem. At each time step of running, the controller compute the solutions until $P$ steps, while only the first one is applied. This process is repeated at the next sampling time, constituting a moving horizon scheme. {}{Note that in the implementation of MPC, the control sequence is applied to the continuous-time system~\eqref{eq: thermal dynamics} through a mechanism of zero-order hold. Consequently, the hybrid nature of TCLs with their dynamics governed by ODEs while switching between ON and OFF states will be preserved.}
%


\subsection{{}{PDE Model for Heterogeneous Populations of TCLs}}\label{Sec: Sec: Diffusive Model}
Let $v(x,t)$ and $w(x,t)$ be the distribution of loads [number of loads/$^{\circ}$C] at temperature $x$ and time $t$, over the OFF and ON states, respectively. For notational simplicity, we consider hereafter only the setting for cooling systems. The models for the setting of heating systems can be established in the same way.

First, consider the distribution of loads over the ON state. Let $F(x,t)$ denote the flow of loads at the point $(x,t)$. Then, {}{based on the model describing the microscopic dynamics of individual TCLs given in Section~\ref{Sec: Individual Dynamics of TCLs}}, we have for a population of homogeneous loads:
\begin{align}
  & F(x,t) = \alpha_{1}(x) w(x,t), \label{eq: ON flow in}
\end{align}
where
\begin{equation}\label{eq: alpha ON}
  \alpha_{1}(x) = \dfrac{1}{R C}\left(x_{e}-x - RP\times 1\right),
\end{equation}
with $x_{e}$, $R$, $C$, and $P$ representing the external temperature, thermal resistance, capacitance, and power, respectively, of the population.

{}{
Let $\delta_{0\rightarrow 1}(x,t)$ denote the added flow due to the switch of loads from OFF state to ON state at $(x,t)$, which is induced by the operation of MPCs at the TCL level. Note that as pointed out in \cite{moura2013modeling}, the aggregate power consumption of a heterogenous TCL population exhibits a damped response to step set-point changes. This property can be represented by the diffusivity added to the macroscopic fluid-flow TCL population model (see, e..g. \cite{Bashash2013}), which leads to the following Fokker-Planck equation:
\begin{align}
  \dfrac{\partial w}{\partial t}(x,t) =& \dfrac{\partial}{\partial x}\left( \beta\dfrac{\partial w}{\partial x}(x,t)- \alpha_{1}(x)w(x,t)\right)
                                         + \delta_{0 \rightarrow 1}(x,t), {{}\; (x,t)\in Q_{\infty},} \label{eq: FPE ON 0}
\end{align}
where $\beta$ is the diffusion coefficient.}

{}{Similarly, we can derive the dynamic model of the distribution of loads over the OFF state, which is given by
\begin{align}
  \dfrac{\partial v}{\partial t}(x,t) =& \dfrac{\partial}{\partial x}\left( \beta\dfrac{\partial v}{\partial x}(x,t)- \alpha_{0}(x)v(x,t)\right)
                                          {}{+\delta_{1 \rightarrow 0}(x,t)}, {{}\; (x,t)\in Q_{\infty},} \label{eq: FPE OFF 0}
\end{align}
where
\begin{equation}\label{eq: alpha OFF}
  \alpha_{0}(x) = \dfrac{1}{R C}\left(x_{e}-x\right),
\end{equation}
and $\delta_{1 \rightarrow 0}(x,t)$ denotes the added flow due to the switch of loads from ON-state to OFF-state at $(x,t)$.}

Suppose that in the considered problem the operation is mass-conservative, that is the size of the population is constant. Indeed, it is reasonable to assume that no TCLs will be added to or removed from the population during a sufficiently long period of operation. Then, the flow due to the switch of loads at $(x,t)$ must verify the compatibility condition $\delta_{0 \rightarrow 1}(x,t) = -\delta_{1 \rightarrow 0}(x,t) := \delta(x,t)$. Therefore, the dynamics of the evolution of the distribution of loads can be expressed by the following coupled Fokker-Planck equations:
\begin{subequations}\label{eq: FPE}
\begin{align}
  \dfrac{\partial w}{\partial t}(x,t) =& \dfrac{\partial}{\partial x}\left( \beta\dfrac{\partial w}{\partial x}(x,t)- \alpha_{1}(x)w(x,t)\right)
                                          +\delta(x,t), {{}\; (x,t)\in Q_{\infty},} \label{eq: FPE ON}\\
  \dfrac{\partial v}{\partial t}(x,t) =& \dfrac{\partial}{\partial x}\left( \beta\dfrac{\partial v}{\partial x}(x,t)- \alpha_{0}(x)v(x,t)\right)
                                          -\delta(x,t), {{}\; (x,t)\in Q_{\infty}}. \label{eq: FPE OFF}
\end{align}
\end{subequations}
{}{To assure the validity of the PDE aggregate model given in~\eqref{eq: FPE}, we impose the following boundary and initial conditions}
\begin{subequations}\label{eq: BIs of FPE}
\begin{align}
  &\beta\dfrac{\partial w}{\partial x}(\underline{x},t)- \alpha_{1}(\underline{x})w(\underline{x},t)=0,\\
   &\beta\dfrac{\partial w}{\partial x}(\overline{x},t)- \alpha_{1}(\overline{x})w(\overline{x},t)=0,\\
  &\beta\dfrac{\partial v}{\partial x}(\underline{x},t)- \alpha_{0}(\underline{x})v(\underline{x},t)=0,\\
   &\beta\dfrac{\partial v}{\partial x}(\overline{x},t)- \alpha_{0}(\overline{x})v(\overline{x},t)=0,\\
  &w(x,0) = w^0, \ v(x,0) = v^0,\label{eq: ICs of FPE}
\end{align}
\end{subequations}
{}{which guarantees the conservativity of mass of the problem under consideration, as stated below.}
\begin{proposition} Let the size of the population modeled by \eqref{eq: FPE} and \eqref{eq: BIs of FPE}  be defined as:
\begin{equation*}
  N_{\text{agg}}(t) = \int_{\underline{x}}^{\overline{x}} \left(w(x,t)+v(x,t)\right)\text{d}x, \ t\geq 0.
\end{equation*}
We have then
\begin{equation*}
  N_{\text{agg}}(t) = N_{\text{agg}}(0) = \int_{\underline{x}}^{\overline{x}} \left(w^0(x)+v^0(x)\right)\text{d}x, \ \forall t \geq 0.
\end{equation*}
\end{proposition}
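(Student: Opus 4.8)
The plan is to show that $N_{\text{agg}}(t)$ has vanishing time derivative, so that it equals its initial value for every $t\ge 0$. The two structural features to exploit are that the source terms enter \eqref{eq: FPE ON} and \eqref{eq: FPE OFF} with opposite signs, and that the prescribed conditions \eqref{eq: BIs of FPE} are precisely the no-flux (reflecting) conditions associated with the flux operators appearing in the two equations.

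First I would differentiate $N_{\text{agg}}$ under the integral sign (legitimate by the regularity of the solution) to obtain
\begin{equation*}
\frac{\text{d}}{\text{d}t} N_{\text{agg}}(t) = \int_{\underline{x}}^{\overline{x}} \left( \frac{\partial w}{\partial t}(x,t) + \frac{\partial v}{\partial t}(x,t) \right) \text{d}x.
\end{equation*}
Substituting the right-hand sides of \eqref{eq: FPE ON} and \eqref{eq: FPE OFF} and adding, the contributions $+\delta(x,t)$ and $-\delta(x,t)$ cancel exactly, leaving only the two divergence-form flux terms. This cancellation is the whole point of the compatibility condition $\delta_{0\rightarrow 1}=-\delta_{1\rightarrow 0}$ imposed in the modeling.

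Next I would integrate each remaining flux term over $(\underline{x},\overline{x})$ by the fundamental theorem of calculus, so that the integral of $\frac{\partial}{\partial x}\left(\beta w_x - \alpha_1 w\right)$ collapses to the boundary value $\left[\beta w_x - \alpha_1 w\right]_{\underline{x}}^{\overline{x}}$, and similarly for the OFF component with $\alpha_0$. The four boundary conditions in \eqref{eq: BIs of FPE} assert exactly that each of these fluxes vanishes at both $\underline{x}$ and $\overline{x}$, so all boundary contributions are zero. Hence $\frac{\text{d}}{\text{d}t} N_{\text{agg}}(t)=0$, and integrating in time yields $N_{\text{agg}}(t)=N_{\text{agg}}(0)$ as claimed.

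I do not anticipate a genuine difficulty here: this is the familiar mass-conservation property of a Fokker--Planck system with zero-flux boundaries. The only point that warrants care is the justification of differentiation under the integral and of the boundary-trace evaluations, which presupposes that $w$ and $v$ are sufficiently smooth up to the boundary. That regularity is exactly what the well-posedness result of Section~\ref{Sec: Well-posedness} supplies, so the argument is complete once those estimates are invoked.
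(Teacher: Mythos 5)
Your argument is exactly the paper's proof, spelled out: integrate the sum of \eqref{eq: FPE ON} and \eqref{eq: FPE OFF} so that the $\pm\delta$ terms cancel, reduce the divergence-form flux terms to boundary evaluations, and invoke \eqref{eq: BIs of FPE} to conclude $\frac{\text{d}}{\text{d}t}N_{\text{agg}}(t)=0$. The paper states this in one line; your version is correct and takes the same route.
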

{}{\begin{proof}
The result can be obtained directly by taking integration of \eqref{eq: FPE} and using the boundary conditions \eqref{eq: BIs of FPE}.
\end{proof}}

\begin{remark}
In the formulation proposed in this work, the reference temperature is the same for all the TCLs in the population. However, it is important to note that the range around the reference which each TCL attempts to restrict to (also called the comfort zone) can be different from other TCLs depending on its performance requirement and power consumption consideration. This feature is significantly different from that in thermostat-based deadband control schemes where the turn-on and turn-off temperatures are identical for all the TCLs in the same population.
\end{remark}

For notational simplicity, we {}{may} ignore hereafter the domain on which the system is defined and the arguments of different functions if there is no ambiguity.

\subsection{Controlled PDE Model for Power Consumption Control}\label{Sec: Forced Diffusive Model}
As mentioned earlier, power consumption control of a TCL population can be achieved by moving the control volume with the set-point temperature {\cite{Callaway:2009,Bashash2013,Ghaffari:2015}}. By taking the time-derivative of the set-point as the control input: $u(t) = \dot{x}_{\text{ref}}$, the dynamics of the evolution of the distribution of a population of heterogenous loads can be expressed by the following forced Fokker-Planck equations:
\begin{subequations}\label{eq: foced FPE}
\begin{align}
  \dfrac{\partial w}{\partial t}(x,t) =& \dfrac{\partial}{\partial x}\left( \beta\dfrac{\partial w}{\partial x}(x,t)- (\alpha_{1}(x)-u(t))w(x,t) \right) +\delta(x,t), {{}\; (x,t)\in Q_{\infty},} \label{eq: foced FPE ON}\\
  \dfrac{\partial v}{\partial t}(x,t) =&\dfrac{\partial}{\partial x}\left( \beta\dfrac{\partial v}{\partial x}(x,t)- (\alpha_{0}(x)-u(t))v(x,t) \right) -\delta(x,t), {{}\; (x,t)\in Q_{\infty},} \label{eq: foced FPE OFF}
\end{align}
\end{subequations}
with the following boundary and initial conditions:\begin{subequations}\label{eq: BCs of FPE}
\begin{align}
  &\beta\dfrac{\partial w}{\partial x}(\underline{x},t)- (\alpha_{1}(\underline{x})-u(t))w(\underline{x},t)=0,\\
   &\beta\dfrac{\partial w}{\partial x}(\overline{x},t)- (\alpha_{1}(\overline{x})-u(t))w(\overline{x},t)=0,\\
  &\beta\dfrac{\partial v}{\partial x}(\underline{x},t)- (\alpha_{0}(\underline{x})-u(t))v(\underline{x},t)=0,\\
   &\beta\dfrac{\partial v}{\partial x}(\overline{x},t)- (\alpha_{0}(\overline{x})-u(t))v(\overline{x},t)=0,\\
  &w(x,0) = w^0, \ v(x,0) = v^0.
\end{align}
\end{subequations}

{}{For the validity of the PDE aggregate model given in \eqref{eq: foced FPE} and \eqref{eq: BCs of FPE}, it is expected that the closed-loop system preserves the property of mass conservativity, which is guaranteed by Proposition~\ref{prop. 8} given later in Section~\ref{Sec: Well-posedness}.
}

\section{Control Design}\label{Sec: Control design}
For the purpose of controlling the power consumption of the whole population of TCLs, we take the weighted total power consumption as the system output:
\begin{equation}\label{eq: output}
  y(t) = \dfrac{P}{\eta}\int_{\underline{x}}^{\overline{x}} (ax+b)w(x,t)\text{d}x,
\end{equation}
where $a$ and $b$ are constants with $a\neq 0$. {}{The purpose of introducing the weighting function $ax+b$ in system output defined above is to guarantee that the input-output dynamics of the system are well-defined in terms of characteristic index that is an analogue of the concept of relative degree for finite-dimensional systems \cite{Christofides1996,Christofides2001}}. The reason to use a specific weighting function in \eqref{eq: output} is mainly due to the fact that in general it is very difficult, if not impossible, to conduct well-posedness and stability analysis for nonlinear PDEs with generic nonlinear terms. Note that the weighting function is not unique, and it can be chosen and tuned in control design and implementation depending on control objectives and performance requirements.

We consider in the present work a set-point control problem where the objective is to drive the power consumption of the whole population to a desired level $y_d$ representing the demand from, e.g., energy suppliers.

Denoting by $e(t) = y(t) - y_d$ the regulation error, {}{we can derive from \eqref{eq: foced FPE ON} and \eqref{eq: output} that}
\begin{align*}
  \dfrac{\diff{e}}{\diff{t}}
  =& \dfrac{P}{\eta}\int_{\underline{x}}^{\overline{x}} (ax+b)\dfrac{\partial w}{\partial t}\text{d}x\\
  =& \dfrac{P}{\eta}\int_{\underline{x}}^{\overline{x}} (ax+b)\dfrac{\partial}{\partial x}\bigg( \beta\dfrac{\partial w}{\partial x}-
     (\alpha_{1}(x)-u(t))w \bigg)\text{d}x+ \dfrac{P}{\eta}\int_{\underline{x}}^{\overline{x}} (ax+b)\delta(x,t) \text{d}x.
\end{align*}
Performing integration by parts and applying the boundary conditions \eqref{eq: BCs of FPE}, we get
\begin{align}
  \dfrac{\diff{e}}{\diff{t}}
     =-&\dfrac{P}{\eta}\int_{\underline{x}}^{\overline{x}}a\bigg(\beta\dfrac{\partial w}{\partial x}-(\alpha_{1}(x)-u(t))w\bigg) \text{d}x+ \dfrac{P}{\eta}\int_{\underline{x}}^{\overline{x}} (ax+b)\delta(x,t) \text{d}x.\notag
\end{align}
Let
\begin{align}
   u(t) = &- \frac{\displaystyle\int_{\underline{x}}^{\overline{x}}\bigg( \beta\dfrac{\partial w}{\partial x}- \alpha_{1}(x)w \bigg)
            \diff{x} +\frac{\eta}{{}{aP}}\phi(t)}{\displaystyle\int_{\underline{x}}^{\overline{x}}{}{w}\diff{x}} \nonumber\\
        = &- \frac{\beta(w(\overline{x},t)-w(\underline{x},t))-\displaystyle\int_{\underline{x}}^{\overline{x}}\alpha_{1}(x)w
            \diff{x} +\frac{\eta}{{}{aP}}\phi(t)}{\displaystyle\int_{\underline{x}}^{\overline{x}}{}{w}\diff{x}}, \label{eq: control u}\\
   \Gamma(t)=& \dfrac{P}{\eta}\int_{\underline{x}}^{\overline{x}} (ax+b)\delta(x,t)\text{d}x. \label{eq: control Gamma}
\end{align}
Then the regulation error dynamics become:
\begin{equation}\label{eq: error dynamics 1}
  \dfrac{\diff{e}}{\diff{t}} = \phi(t) + \Gamma(t), \ \ e(0) = e_0,
\end{equation}
where $\phi(t)$ is an auxiliary control, and $e_0$ is the initial regulation error.

\begin{remark}
It can be seen from \eqref{eq: control u} that the characteristic index of the input-output dynamics of the system is 1 if at any time the power consumption of the whole population is not null. This constraint is a nature property for a TCL population of sufficiently large size and can be guaranteed by a simple operation in practice.
\end{remark}

\begin{remark}
{}{Note that the term $\Gamma(t)$ represents the effect of switching induced by the MPCs at the TCL level. An advantage of treating $\Gamma(t)$ as a disturbance in the proposed aggregate power control scheme is that the TCLs do not need to signal the instantaneous switching operations, which will allow greatly simplifying the implementation and considerably relaxing the performance requirement on communication systems.} Moreover, the developed control algorithm is a partial state-feedback scheme depending only on $w(x,t)$, which is exactly the same information needed for computing the system output {\cite{Callaway:2009,Totu:2017,Bashash2013,Ghaffari:2015, le2016pde, Ghanavati:2018}}. Finally, communicating the current operational
state and the output (the temperature) of the TCLs can be enabled by, e.g., advanced metering infrastructures in the context of the Smart Grid \cite{Callaway:2009}.
\end{remark}

The control synthesis will then be completed by finding a $\phi(t)$ that {}{can robustly stabilize the regulation error dynamics in the presence of disturbances.} For this aim, we apply a standard and very effective method, called nonlinear damping (see, e.g., \cite{Khalil:2002,Krstic:1995}). Specifically, it is reasonable to assume that $\Gamma(t)$ is uniformly bounded, i.e., $|\Gamma(t)| \leq \Gamma_{\infty}$ for all $t > 0$ with $\Gamma_{\infty}$ a positive constant, because at any moment only a limited number of TCLs change their operational state. Then, it is obvious that the control
\begin{equation}\label{eq: stabilizer}
  \phi(t) = -k_0e(t)
\end{equation}
with any constant gain $k_0 > 0$ will globally exponentially stabilize \eqref{eq: error dynamics 1} when $\Gamma(t) \equiv 0$ for all $t > 0$. Furthermore, $V = e^2$ is a control Lyapunov function for the disturbance-free counterpart of the system~\eqref{eq: error dynamics 1}. Therefore, due to \cite[Lemma~14.1, p.~589]{Khalil:2002}, the control given in \eqref{eq: stabilizer} guarantees that the trajectory of the system~\eqref{eq: error dynamics 1} is globally uniformly bounded. Moreover, the amplitude of regulation error $e(t)$ can be rendered arbitrarily small if the control gain $k_0$ is sufficiently high. This property is explicitly described in Theorem~\ref{theorem 12} in Section~\ref{Sec: Stability Analysis}.

{}{The schematic diagram of the considered aggregate power control system for TCL populations is shown in Fig.~\ref{Fig: scheme_control}, which has the same structure as most of the systems developed in the literature (see, e.g., \cite{Callaway:2009,Liu:2016,Liu2016MPC,Mahdavi:2017,tindemans2015decentralized,Totu:2017,Bashash2013,Ghaffari:2015,Ghanavati:2018}, etc.), except that MPC is used at the level of each TCL in a fully distributed manner. The computation of PDE control law requires only the value of the temperature of the TCLs and their current operational state. Whilst, the reference for MPCs is generated from the PDE control signal as $x_{\text{ref}}(t) = x_{\text{ref}}(0)+\int_{0}^{t}\dot{x}_{\text{ref}}(\tau)\text{d}\tau =  x_{\text{ref}}(0)+\int_{0}^{t}u(\tau)\text{d}\tau$. }

\begin{figure}[htpb]\
  \centering
  \includegraphics[scale=0.2]{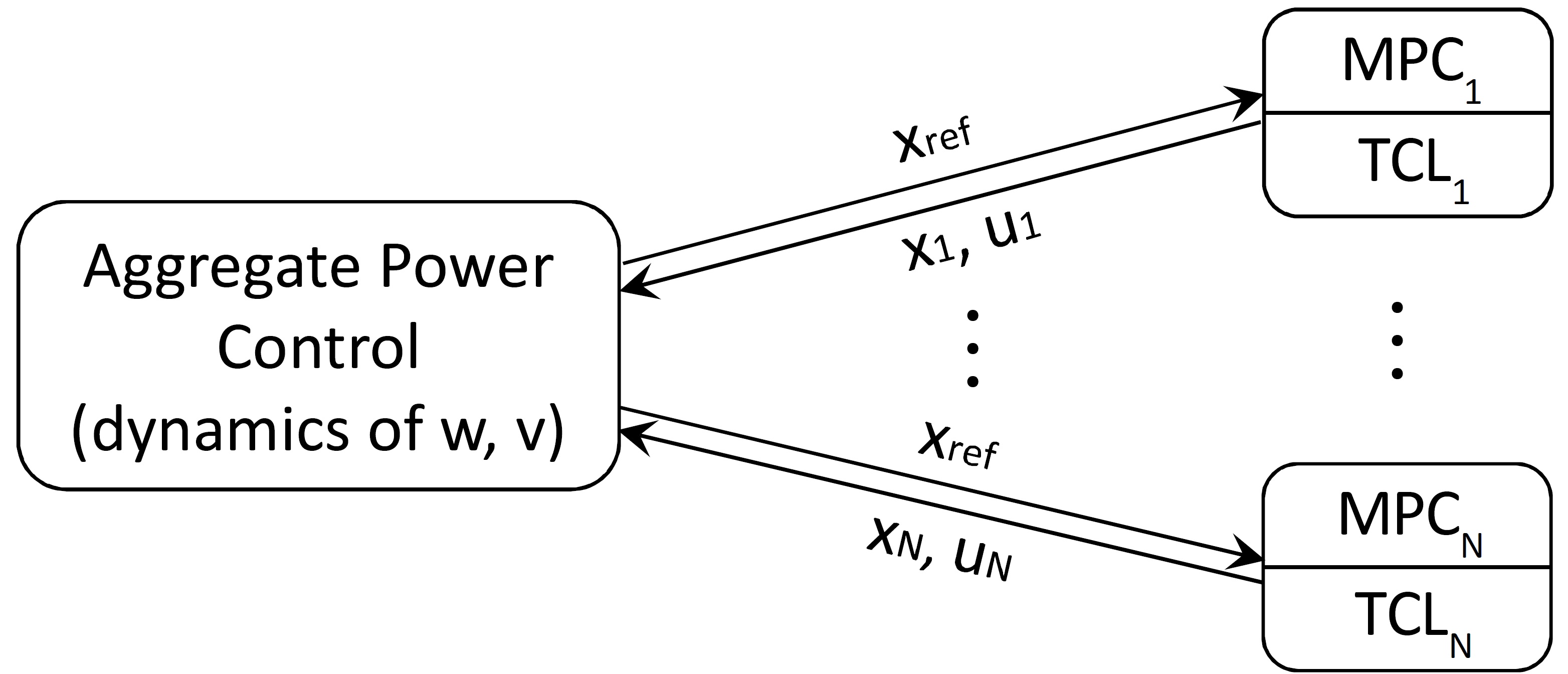}
  \caption{{}{Schematic diagram of aggregate power control of a TCL population operated by MPC at the TCL level.}
           }\label{Fig: scheme_control}
\end{figure}

\section{Well-posedness of the Closed-loop System}\label{Sec: Well-posedness}
In this section, we establish the well-posedness of the closed-loop system composed of \eqref{eq: foced FPE}, \eqref{eq: BCs of FPE}, \eqref{eq: output}, \eqref{eq: control u}, \eqref{eq: control Gamma}, \eqref{eq: error dynamics 1}, and \eqref{eq: stabilizer}. To this aim, we note that
$\alpha_1,\alpha_0 \in C^1([\underline{x},\overline{x}])$ and $P,\beta,\eta$ are positive constants, and assume that
\begin{align}
  &w^0,v^0\in \mathcal {H}^{2+\theta}([\underline{x},\overline{x}]),\delta \in L^\infty((0,\infty);L^\infty(\underline{x},\overline{x})),\label{22a}
\end{align}
with
\begin{subequations}
\begin{align}
  &\bigg|\int_{\underline{x}}^{\overline{x}}w^0(x)\text{d}x\bigg|>0, \label{eq: ON on 0}\\
  &\bigg|\int_{\underline{x}}^{\overline{x}}w^0(x)\text{d}x+\int_{0}^t\int_{\underline{x}}^{\overline{x}}\delta(x,s)\text{d}x\text{d}s\bigg|\geq \delta_0>0,\forall t\in [0,+\infty),\label{+28}
\end{align}
\end{subequations}
where $\theta\in (0,1),\delta_0>0$ are constants. Note that the conditions given in \eqref{eq: ON on 0} and \eqref{+28} mean that at any time the TCLs are not all in OFF-state.

{}{\begin{theorem}\label{well-posedness}{Consider the closed-loop system. For any $T>0$, there exists $(w,v)\in \mathcal {H}^{2+\theta,1+\frac{\theta}{2}}(\overline{Q}_{T})\times \mathcal {H}^{2+\theta,1+\frac{\theta}{2}}(\overline{Q}_{T})$ that satisfies \eqref{eq: foced FPE} a.e. in $ Q_T$.}
\end{theorem}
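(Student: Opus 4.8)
The plan is to solve the closed-loop system by the method of iteration, reducing the nonlinear nonlocal problem to a sequence of linear parabolic problems covered by the theory of \cite{Ladyzenskaja:1968}. First I would note that the feedback $u(t)$ in \eqref{eq: control u} depends on the state only through $w$, so \eqref{eq: foced FPE ON} together with its boundary conditions is a self-contained nonlocal problem for $w$; once $w$, and hence $u$, is known, \eqref{eq: foced FPE OFF} is a linear parabolic equation for $v$ with time-dependent H\"older coefficients, solvable directly in $\mathcal{H}^{2+\theta,1+\frac{\theta}{2}}(\overline{Q}_T)$. The core of the argument is therefore the $w$-subproblem, which in expanded form reads $w_t=\beta w_{xx}-(\alpha_1-u)w_x-\alpha_1'w+\delta$.

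The structural fact that makes the nonlocal term tractable is that the denominator of \eqref{eq: control u} stays bounded away from zero. Integrating \eqref{eq: foced FPE ON} over $(\underline{x},\overline{x})$ and using the boundary conditions \eqref{eq: BCs of FPE}, which annihilate the flux $\beta w_x-(\alpha_1-u)w$ at both ends, gives $\frac{d}{dt}\int_{\underline{x}}^{\overline{x}}w\,dx=\int_{\underline{x}}^{\overline{x}}\delta\,dx$, so that $\int_{\underline{x}}^{\overline{x}}w(x,t)\,dx=\int_{\underline{x}}^{\overline{x}}w^0\,dx+\int_0^t\int_{\underline{x}}^{\overline{x}}\delta\,dx\,ds$. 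By assumption \eqref{+28} this quantity is $\ge\delta_0>0$ for all $t$. Hence $u$ is a well-defined functional of $w$ whose regularity is governed by the traces $w(\underline{x},\cdot),\,w(\overline{x},\cdot)$ and by the integrals of $w$; for $w\in\mathcal{H}^{2+\theta,1+\frac{\theta}{2}}(\overline{Q}_T)$ these are H\"older in $t$, and since the denominator is bounded below, $u$ is H\"older in $t$ and depends Lipschitz-continuously on $w$.

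With this in hand I would set up the iteration. Taking $w^{(0)}=w^0$, given $w^{(n)}$ I define $u^{(n)}$ by \eqref{eq: control u}--\eqref{eq: stabilizer} evaluated at $w^{(n)}$ and let $w^{(n+1)}$ solve the linear problem obtained by replacing $u$ with $u^{(n)}$ both in the interior equation and in the Robin conditions of \eqref{eq: BCs of FPE}, with $w^{(n+1)}(\cdot,0)=w^0$. Provided the zeroth-order compatibility between $w^0$ and the boundary flux holds, the linear theory of \cite{Ladyzenskaja:1968} yields a unique $w^{(n+1)}\in\mathcal{H}^{2+\theta,1+\frac{\theta}{2}}(\overline{Q}_T)$ with a Schauder estimate depending only on the data and on the $\mathcal{H}^{\frac{\theta}{2}}([0,T])$-norm of $u^{(n)}$; since the same mass identity holds for the linear iterate, every $\int w^{(n+1)}\,dx\ge\delta_0$, keeping all denominators uniformly bounded below. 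Convergence follows in two stages: on a short interval $[0,T_0]$ the difference $w^{(n+1)}-w^{(n)}$ solves a linear problem driven by $u^{(n)}-u^{(n-1)}$, and combining the Lipschitz dependence of $u$ on $w$ with the contraction factor gained from small $T_0$ shows $\{w^{(n)}\}$ is Cauchy, its limit being a solution of \eqref{eq: foced FPE ON}; to reach an arbitrary $T$, the explicit mass formula, a maximum-principle bound on $\|w\|_\infty$, and the resulting uniform bound on $u$ render the Schauder constants and the local existence time uniform, so the solution is continued over successive intervals of length $T_0$ covering $[0,T]$. Solving \eqref{eq: foced FPE OFF} for $v$ then completes the construction.

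The hard part is precisely the nonlocal feedback $u(t)$: it appears simultaneously as a time-dependent coefficient in the interior and in the Robin boundary conditions, and it carries an integral of the unknown in its denominator, so the problem is neither a standard quasilinear equation nor one with local boundary data. The two devices that resolve this are (i) the mass identity, which turns assumption \eqref{+28} into a uniform positive lower bound on the denominator and thereby secures both the well-definedness and the H\"older/Lipschitz regularity of $u$, and (ii) freezing $u$ at the previous iterate, which decouples the nonlocality so that each step is an ordinary linear parabolic problem. Two further technical points require care and are where the generalization of \cite{Ladyzenskaja:1968} enters: the corner compatibility of $w^0\in\mathcal{H}^{2+\theta}$ (from \eqref{22a}) with the $u$-dependent boundary flux, and the merely bounded---rather than H\"older---source $\delta$, which I would accommodate by first obtaining strong $W^{2,1}_p$ solutions and controlling through them the traces and integrals that enter $u$.
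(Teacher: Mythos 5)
Your overall architecture coincides with the paper's: freeze the nonlocal term at the previous iterate so that each step is a linear parabolic problem from \cite{Ladyzenskaja:1968}, use the mass identity to turn assumption \eqref{+28} into a uniform lower bound $\delta_0$ on the denominator of $u$, and treat $v$ afterwards as a linear problem once $u$ is known. Where you genuinely diverge is the convergence mechanism. The paper does \emph{not} run a contraction: it derives $L^1$ bounds on $w_n$ via a regularized absolute-value test function $\rho_\varepsilon$, upgrades them to uniform $L^\infty$ bounds and hence uniform bounds on $G_{w_n}$, obtains $n$-independent Schauder estimates, and then extracts a convergent \emph{subsequence} by compactness. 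This is weaker than what you aim for (it is exactly why the paper's Theorem~\ref{well-posedness} cannot assert uniqueness and Theorem~\ref{Proposition 9} needs the extra hypothesis $w_1(\overline{x},t)-w_1(\underline{x},t)=w_2(\overline{x},t)-w_2(\underline{x},t)$), but it is also more robust, since it only requires bounds rather than Lipschitz estimates.

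Two points in your plan are genuine gaps rather than omitted routine details. First, the asserted Lipschitz dependence of $u$ on $w$ is the delicate step: the numerator of \eqref{eq: control u} contains the trace difference $\beta\bigl(w(\overline{x},t)-w(\underline{x},t)\bigr)$, so $u^{(n)}-u^{(n-1)}$ is controlled only by boundary traces of $w^{(n)}-w^{(n-1)}$, not by its integrals; the paper's own uniqueness argument (see \eqref{+49}) can close the Gronwall estimate only after assuming this trace difference vanishes. Your contraction must therefore be set up in a norm strong enough to control traces (and their H\"older-in-time seminorms, since $u^{(n)}$ enters the coefficients of a Schauder estimate), while still gaining a small factor from $T_0$; this is possible in principle via interpolation with zero initial difference, but it is precisely the step you cannot wave through. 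Second, your treatment of the merely bounded source $\delta$ via $W^{2,1}_p$ solutions does not deliver the conclusion of the theorem: the statement places $(w,v)$ in $\mathcal{H}^{2+\theta,1+\frac{\theta}{2}}(\overline{Q}_T)$, and a $W^{2,1}_p$ solution will not lie there. The paper instead replaces $\delta$ by a sequence $\delta_n$ that is H\"older in $x$, $C^1$ in $t$ with uniformly bounded time derivative and sup-norm, satisfies the lower bound \eqref{+28'}, and converges to $\delta$ a.e.; the uniform Schauder bounds then survive the limit and the limit pair satisfies \eqref{eq: foced FPE} only almost everywhere, which is exactly how the theorem is phrased. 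Without some such regularization of $\delta$ your construction cannot reach the claimed function space.
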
}

{}{It should be pointed out that with the proposed control scheme, the closed-loop system is governed by parabolic PDEs with nonlinear nonlocal terms, which brings the following two obstacles for well-posedness analysis:
\begin{itemize}
   \item[(i)] Due to the appearance of a nonlinear nonlocal term in the form of ${w_x(x,t)\int \alpha_1(x)w(x,t)\text{d}x}/{\int w(x,t)\text{d}x}$, one cannot deal with the original PDEs directly and apply the Lyapunov method to obtain the \emph{a prior} estimates of the solutions. Consequently, the Galerkin method cannot be directly used for well-posedness analysis.
    \item[(ii)] The corresponding PDEs may have a singularity due to the appearance of the term $\int w(x,t)\text{d}x$ in the denominator of the nonlocal term. Therefore, the classical Leray-Schauder fixed-point theorem cannot be applied directly in well-posedness analysis.
\end{itemize}
To overcome the difficulties brought by the nonlinear nonlocal character, we use in the present work the technique of iteration for well-posedness analysis. }

{}{Specifically, for} any $T>0$, let $ \{\delta_n\}$ be a sequence of functions on $\overline{Q}_{T}$ satisfying:
\begin{itemize}
  \item $\delta_n(x,t)$ is H\"{o}lder continuous in $x$ with exponent $\theta$;
  \item $\delta_n(x,t)$ is $C^1$-continuous in $t$ with $\bigg |\frac{\partial\delta_{n}}{\partial t}(x,t)\bigg|\leq \widehat{\delta}$ in $\overline{Q}_T$, {}{where $ \widehat{\delta}$ is a positive constant};
  \item $\delta_n(x,t)\rightarrow \delta(x,t)$ a.e. in $Q_T$, as $n\rightarrow \infty$;
  \item $\|\delta_n\|_\infty\leq 1+\|\delta\|_\infty,\forall n\geq 1$;
  \item {}{for any $n\geq 1$ and $t\in [0,T]$, it holds: \begin{align}\bigg|\int_{\underline{x}}^{\overline{x}}w^0(x)\text{d}x+\int_{0}^t\int_{\underline{x}}^{\overline{x}}\delta_n(x,s)\text{d}x\text{d}s\bigg|\geq \frac{ \delta_0}{2}>0.\label{+28'}\end{align}}
\end{itemize}

Consider {}{the following two sets of iterating} equations in $Q_T$:
\begin{subequations}\label{+35}
\begin{align}
  &\dfrac{\partial w_n}{\partial t}(x,t) = -\dfrac{\partial}{\partial x}\left(\alpha_{1}(x)w_n(x,t)\right)
                                         +\beta \dfrac{\partial^2 w_n}{\partial x^2}(x,t)  -\dfrac{\partial w_n(x,t)}{\partial x}G_{w_{n-1}}(t)
   +\delta_n(x,t) ,  \label{+35a}\\
    &\beta \dfrac{\partial w_n}{\partial x}(\overline{x},t)-\alpha_1(\overline{x})w_{n}(\overline{x},t)-w_{n}(\overline{x},t)G_{w_{n-1}}(t)=0,\label{+35b}\\
    &\beta \dfrac{\partial w_n}{\partial x}(\underline{x},t)-\alpha_1(\underline{x})w_{n}(\underline{x},t)-w_{n}(\underline{x},t)G_{w_{n-1}}(t)=0,\label{+35c}\\
     &w_{n}(x,0)=w^0(x),\label{+35d}
\end{align}
\end{subequations}
and

\begin{subequations}\label{+v}
\begin{align}
 &\dfrac{\partial v_n}{\partial t}(x,t) = -\dfrac{\partial}{\partial x}\left(\alpha_{1}(x)v_n(x,t)\right)
                                         +\beta \dfrac{\partial^2 v_n}{\partial x^2}(x,t)  -\dfrac{\partial v_n(x,t)}{\partial x}G_{w_{n}}(t)
                                         +\delta_n(x,t) ,  \label{+35a'}\\
                                         &\beta \dfrac{\partial v_n}{\partial x}(\overline{x},t)-\alpha_0(\overline{x})v_{n}(\overline{x},t)-v_{n}(\overline{x},t)G_{w_{n}}(t)=0,\label{+35b'}\\
       &\beta \dfrac{\partial v_n}{\partial x}(\underline{x},t)-\alpha_0(\underline{x})v_{n}(\underline{x},t)-v_{n}(\underline{x},t)G_{w_{n}}(t)=0,\label{+35c'}\\
      & v_{n}(x,0)=v^0(x),\label{+35d'}
  \end{align}
\end{subequations}
where
\begin{align*}
G_{w_{{}{n-1}}}(t)=&\dfrac{\displaystyle\int_{\underline{x}}^{\overline{x}}\bigg(\beta \dfrac{\partial w_{{}{n-1}}}{\partial x}-\alpha_1 w_{{}{n-1}}\bigg)\text{d}x} {\displaystyle\int_{\underline{x}}^{\overline{x}}w_{{}{n-1}}\text{d}x}-\frac{k_0}{{}{a}}\dfrac{\displaystyle\int_{\underline{x}}^{\overline{x}}(ax+b)w_{{}{n-1}}\text{d}x -{}{\frac{\eta}{P}}y_d} {\displaystyle\int_{\underline{x}}^{\overline{x}}w_{{}{n-1}}\text{d}x},\ {}{n\geq 1},
\end{align*}
with {}{$w_0(x,t):= w^0(x)$}.

{}{We claim that for any $n$, there exists a unique solution $w_n,v_n\in \mathcal {H}^{2+\theta,1+\frac{\theta}{2}}(\overline{Q}_{T})$ to the iterating equations \eqref{+35} and \eqref{+v} respectively.
\begin{lemma}\label{well-posedness of iterating equaitons}
For any $n$, \eqref{+35} and \eqref{+v} admit a unique solution $w_n\in \mathcal {H}^{2+\theta,1+\frac{\theta}{2}}(\overline{Q}_{T})$ and $v_n\in \mathcal {H}^{2+\theta,1+\frac{\theta}{2}}(\overline{Q}_{T})$ respectively, satisfying {}{for any $t\in [0,T]$}:
\begin{align*}
&\int_{\underline{x}}^{\overline{x}}w_n(x,t)\text{d}x=\int_{\underline{x}}^{\overline{x}}w^0(x)
\text{d}x+\int_{0}^t\int_{\underline{x}}^{\overline{x}}\delta_n(x,s)\text{d}x\text{d}s\geq \frac{ \delta_0}{2},\\
&\int_{\underline{x}}^{\overline{x}}v_n(x,t)\text{d}x=\int_{\underline{x}}^{\overline{x}}v^0(x)
\text{d}x-\int_{0}^t\int_{\underline{x}}^{\overline{x}}\delta_n(x,s)\text{d}x\text{d}s\geq \frac{ \delta_0}{2},
\end{align*}
and
\begin{subequations}\label{eq: lemma 1 sequence}
\begin{align}
&\|w_n(\cdot,t)\|_1\leq  \|w^0\|_1
+\displaystyle\int_0^t\int_{\underline{x}}^{\overline{x}}\delta_n(x,s) \text{sgn} (w_n) \text{d}x\text{d}s, \label{eq: lemma 1 sequence w}\\
&\|v_n(\cdot,t)\|_1\leq  \|v^0\|_1
-\displaystyle\int_0^t\int_{\underline{x}}^{\overline{x}}\delta_n(x,s) \text{sgn} (v_n) \text{d}x\text{d}s  \label{eq: lemma 1 sequence v}.
\end{align}
\end{subequations}
Moreover, $(w_n,v_n)$ has the following uniform estimates:
\begin{subequations}\label{20191109}
\begin{align}
&\max_{\overline{Q}_{T}}|w_n|+\max_{\overline{Q}_{T}}|w_{nx}|+|w_n|^{(2+\theta)}_{Q_{T}}\leq C_1,\\
&\max_{\overline{Q}_{T}}|v_n|+\max_{\overline{Q}_{T}}|v_{nx}|+|v_n|^{(2+\theta)}_{Q_{T}}\leq C_2,
\end{align}
\end{subequations}
where $C_1$ and $C_2$ are positive constants independent of $n$.
\end{lemma}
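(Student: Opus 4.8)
The plan is to proceed by induction on $n$, exploiting the fact that, once the previous iterate $w_{n-1}$ (and, for \eqref{+v}, the already-constructed $w_n$) is fixed, each of the systems \eqref{+35} and \eqref{+v} is a \emph{linear}, uniformly parabolic problem in the single unknown $w_n$ (resp.\ $v_n$): the nonlocal quantity $G_{w_{n-1}}(t)$ enters only as a spatially constant, time-dependent coefficient in the drift and in the conormal boundary conditions. Thus the entire difficulty of the original nonlinear nonlocal equation is displaced into (i) verifying that $G_{w_{n-1}}$ is an admissible coefficient and (ii) showing that the resulting estimates do not deteriorate as $n\to\infty$. I would carry the inductive hypothesis $w_{n-1}\in\mathcal{H}^{2+\theta,1+\frac{\theta}{2}}(\overline{Q}_T)$ with $\int_{\underline{x}}^{\overline{x}}w_{n-1}\,\text{d}x\geq \delta_0/2$; the base case is $w_0:=w^0\in\mathcal{H}^{2+\theta}([\underline{x},\overline{x}])$ with $\big|\int_{\underline{x}}^{\overline{x}}w^0\,\text{d}x\big|>0$ by \eqref{eq: ON on 0}.

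The first substantive step is solvability and uniqueness at level $n$. Since $w_{n-1}\in\mathcal{H}^{2+\theta,1+\frac{\theta}{2}}$, both $w_{n-1}$ and $w_{n-1,x}$ are Hölder continuous, so the numerators $\int_{\underline{x}}^{\overline{x}}(\beta w_{n-1,x}-\alpha_1 w_{n-1})\,\text{d}x$ and $\int_{\underline{x}}^{\overline{x}}(ax+b)w_{n-1}\,\text{d}x$ are Hölder continuous in $t$; combined with the denominator $\int_{\underline{x}}^{\overline{x}}w_{n-1}\,\text{d}x\geq\delta_0/2>0$ this gives $G_{w_{n-1}}\in C^{\theta/2}([0,T])$. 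The drift $\alpha_1(x)+G_{w_{n-1}}(t)$ is then Hölder on $\overline{Q}_T$, the equation is uniformly parabolic since $\beta>0$, and \eqref{+35b}--\eqref{+35c} are conormal (third-kind) conditions; subject to the requisite compatibility conditions at $t=0$ (which I would read off from the regularity \eqref{22a} and the boundary relations), the classical linear parabolic Schauder theory of \cite{Ladyzenskaja:1968} yields a unique $w_n\in\mathcal{H}^{2+\theta,1+\frac{\theta}{2}}(\overline{Q}_T)$; uniqueness also follows from an energy estimate and Gr\"onwall applied to the difference of two solutions. The system \eqref{+v} for $v_n$ is treated identically with the coefficient $G_{w_n}$.

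Next I would establish the integral identities and the $L^1$ bounds. Integrating \eqref{+35a} in $x$ and using that the full flux $\beta w_{n,x}-(\alpha_1+G_{w_{n-1}})w_n$ vanishes at both endpoints by \eqref{+35b}--\eqref{+35c} gives $\frac{\text{d}}{\text{d}t}\int_{\underline{x}}^{\overline{x}} w_n\,\text{d}x=\int_{\underline{x}}^{\overline{x}}\delta_n\,\text{d}x$, hence the stated mass identity, which with \eqref{+28'} closes the inductive lower bound $\int_{\underline{x}}^{\overline{x}} w_n\,\text{d}x\geq\delta_0/2$; the computation for $v_n$ is analogous. For \eqref{eq: lemma 1 sequence w} I would multiply \eqref{+35a} by a smooth approximation of $\text{sgn}(w_n)$ and pass to the limit: writing the equation in flux form and integrating by parts, the advection and the Kato-type diffusion contributions are nonpositive once the vanishing flux boundary terms are discarded, leaving $\frac{\text{d}}{\text{d}t}\|w_n\|_1\leq\int_{\underline{x}}^{\overline{x}}\delta_n\,\text{sgn}(w_n)\,\text{d}x$, which integrates to the claim; \eqref{eq: lemma 1 sequence v} is the same. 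In particular $\|w_n\|_1$ and $\|v_n\|_1$ are bounded uniformly in $n$ by the data and $\|\delta\|_\infty$.

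The main obstacle is the uniform-in-$n$ estimate \eqref{20191109}: the difficulty is the circular dependence ``bound on $w_n\leftarrow$ regularity of $G_{w_{n-1}}\leftarrow$ bound on $w_{n-1}$'', which must be shown to terminate rather than amplify. The idea that breaks the loop is to expand the divergence, $\partial_x\big((\alpha_1(x)+G_{w_{n-1}}(t))w_n\big)=\alpha_1'(x)w_n+(\alpha_1(x)+G_{w_{n-1}}(t))\partial_x w_n$, so that $G_{w_{n-1}}$ enters the non-divergence form equation \emph{only} as a spatially constant first-order transport coefficient, while the zeroth-order coefficient $\alpha_1'$ and the source $\delta_n$ are controlled independently of $n$. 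I would then argue in stages. In stage (A), a maximum-principle/energy bound whose constant is insensitive to the magnitude of the transport coefficient gives $\max_{\overline{Q}_T}|w_n|\leq C_0$ with $C_0$ genuinely independent of $n$. In stage (B), feeding this and the uniform $L^1$ bound into $G_{w_{n-1}}$ — using $\int_{\underline{x}}^{\overline{x}}\beta w_{n-1,x}\,\text{d}x=\beta(w_{n-1}(\overline{x},\cdot)-w_{n-1}(\underline{x},\cdot))$ and that the remaining numerator and the $(ax+b)$-term are $\leq C\|w_{n-1}\|_1$, over a denominator $\geq\delta_0/2$ — yields a uniform $\|G_{w_{n-1}}\|_\infty\leq C'$. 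In stage (C), the $L^p$-parabolic ($W^{2,1}_p$) estimates, whose constants depend on the coefficients only through their sup-norms, together with the embedding into $C^{1+\mu,\frac{1+\mu}{2}}$ for large $p$, give uniform bounds on $\max_{\overline{Q}_T}|\partial_x w_n|$ and on the H\"older seminorms of $w_n$. In stage (D), these bound $\|G_{w_{n-1}}\|_{C^{\theta/2}([0,T])}$ uniformly, after which the classical Schauder estimate with an $n$-independent constant delivers $|w_n|^{(2+\theta)}_{Q_T}\leq C_1$; the estimates for $v_n$ follow identically since $G_{w_n}$ is already uniformly controlled. I expect stages (A) and (C) — the two places where one must invoke estimates insensitive, respectively, to the magnitude and to the H\"older regularity of the drift — to be the technical heart, as they are exactly what prevents the iteration constants from growing with $n$; everything else reduces to the standard linear parabolic theory of \cite{Ladyzenskaja:1968}.
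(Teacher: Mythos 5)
Your overall architecture coincides with the paper's four-step proof: induction on $n$ with the frozen coefficient $G_{w_{n-1}}(t)$, solvability and per-$n$ estimates from the parabolic theory of \cite{Ladyzenskaja:1968} (the paper invokes the quasilinear Theorems 7.2--7.4 and 5.4 of Chap.~V after verifying the structure conditions $-wb_n\le c_0p^2+c_1w^2+c_2$, $-w\psi_n\le c_3w^2$, rather than linear Schauder theory, but for the frozen problem these are interchangeable), the mass identity by integration over $(\underline{x},\overline{x})$ together with \eqref{+28'}, and the $L^1$ bound by multiplying by a $C^2$ regularization $\rho_\varepsilon'(w_n)$ of $\mathrm{sgn}$ and letting $\varepsilon\to0$. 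One imprecision in that last step: the advection contribution $\int w_n(\alpha_1+G_{w_{n-1}})\rho_\varepsilon''(w_n)w_{nx}\,\mathrm{d}x$ is not nonpositive; the paper absorbs half of it into the diffusion term by Young's inequality and controls the remainder via $\|w_n\sqrt{\rho_\varepsilon''(w_n)}\|^2\le\tfrac{3}{2}\varepsilon(\overline{x}-\underline{x})$, so it disappears only in the limit $\varepsilon\to0$ (which is also why the $G$-dependent Gronwall constant is harmless: it carries a factor $\varepsilon$).

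The genuine gap is your stage (A). A sup bound for $w_n$ whose constant is ``insensitive to the magnitude of the transport coefficient'' is false for this boundary-value problem, because $G_{w_{n-1}}$ does not act only as an interior transport term: it also sits in the Robin conditions \eqref{+35b}--\eqref{+35c}, and a lateral boundary maximum is not excluded by the maximum principle when $\alpha_1(\overline{x})+G_{w_{n-1}}(t)>0$ there. Concretely, take $\alpha_1\equiv0$, $\delta_n\equiv0$ and $G\equiv\mathrm{const}$: the stationary profile $w=c\,\mathrm{e}^{Gx/\beta}$ satisfies the equation and both zero-flux boundary conditions, and at fixed mass its maximum grows like $|G|/\beta$; solutions with fixed initial data relax to it. Hence your $C_0$ necessarily depends on $\|G_{w_{n-1}}\|_\infty$ and the device you propose for breaking the circularity does not close. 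What must be controlled uniformly in $n$ is precisely the boundary-trace term $\beta\bigl(w_{n-1}(\overline{x},t)-w_{n-1}(\underline{x},t)\bigr)$ in the numerator of $G_{w_{n-1}}$ --- the remaining numerator terms and the denominator are already handled by the uniform $L^1$ bound and \eqref{+28'}. The paper's Step~4 instead passes from the uniform $L^1$ bound of Step~3 (which is $n$-independent because $\|\delta_n\|_\infty\le1+\|\delta\|_\infty$) to a uniform sup bound on $w_n$, and only then to the uniform bound on $G_{w_n}$ that renders all the Schauder constants in \eqref{20191109} independent of $n$; to repair your argument you must either bound the boundary traces uniformly by some other means or route stage (A) through the $L^1$ estimate as the paper does.
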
}

\begin{proof}
The proof is proceeded in four steps as detailed in {}{Appendix~\ref{Appendix: Lemma 1}}.
\end{proof}
\begin{remark}  {}{The main mathematical framework used in the proof of Lemma \ref{well-posedness of iterating equaitons} is as follows:}
{}{\begin{itemize}
\item [(i)] The existence of a unique solution $(w_n,v_n)\in \mathcal {H}^{2+\theta,1+\frac{\theta}{2}}(\overline{Q}_{T})\times \mathcal {H}^{2+\theta,1+\frac{\theta}{2}}(\overline{Q}_{T})$ to the iterating equations \eqref{+35} and \eqref{+v} is proved by induction combining with the well-posedness and regularity theory for quasilinear parabolic equations in \cite[Chap. V]{Ladyzenskaja:1968}.
   \item [(ii)] The $L^1$-estimates of $w_n$ and $v_n$ in \eqref{eq: lemma 1 sequence} are established by utilizing appropriate test functions and some basic integrating techniques. Note that since $\|\delta_n\text{sgn} (w_n)\|_\infty\leq 1+\|\delta\|_\infty$ and $\|\delta_n\text{sgn} (v_n)\|_\infty\leq 1+\|\delta\|_\infty$, \eqref{eq: lemma 1 sequence} implies the uniform boundedness of $\|w_n\|_1$ and $\|v_n\|_1$, respectively.
  \item[(iii)] The uniform boundedness of $\|w_n\|_1$ and $\|v_n\|_1$ and the continuity of $w_n$ and $v_n$ guarantee the uniform boundedness of $w_n$ and $v_n$, which, along with the well-posedness and regularity theory for quasilinear parabolic equations in \cite[Chap. V]{Ladyzenskaja:1968}, guarantees the uniform estimates of $w_n$ and $v_n$ in \eqref{20191109}.
    \end{itemize}}
\end{remark}
Based on Lemma~\ref{well-posedness of iterating equaitons}, we can establish the existence and the regularity of a solution $(w,v)$ to the coupled equations \eqref{eq: foced FPE} in closed loop formed by \eqref{eq: foced FPE}, \eqref{eq: BCs of FPE}, \eqref{eq: output}, \eqref{eq: control u}, \eqref{eq: control Gamma}, \eqref{eq: error dynamics 1}, and \eqref{eq: stabilizer}.
\begin{proof}[Proof of Theorem~\ref{well-posedness}]
By Lemma~\ref{well-posedness of iterating equaitons}, {}{$(w_n,v_n)$ is bounded in $\mathcal {H}^{2+\theta,1+\frac{\theta}{2}}(\overline{Q}_{T})\times \mathcal {H}^{2+\theta,1+\frac{\theta}{2}}(\overline{Q}_{T})$. Then, by choosing a subsequence, there is a pair of functions $(w,v)\in \mathcal {H}^{2+\theta,1+\frac{\theta}{2}}(\overline{Q}_{T})\times \mathcal {H}^{2+\theta,1+\frac{\theta}{2}}(\overline{Q}_{T})$ satisfying $(w_n,v_n)\rightarrow (w,v)$ in $ \mathcal {H}^{2+\theta,1+\frac{\theta}{2}}(\overline{Q}_{T})\times \mathcal {H}^{2+\theta,1+\frac{\theta}{2}}(\overline{Q}_{T})$ as $n\rightarrow \infty$. Noting that $\delta_n\rightarrow \delta$ a.e. in $ Q_T$, we conclude that $(w,v)$ satisfies \eqref{eq: foced FPE} a.e. in $ Q_T$.}
\end{proof}

It should be noticed that Theorem \ref{well-posedness} can still not guarantee the uniqueness of a solution of the closed-loop system, because the solution is obtained by the technique of iteration and approximation. Moreover, in general it is not trivial to establish the uniqueness of a solution to an equation with strong nonlinearities. {}{Nevertheless, the uniqueness of the solution of the closed-loop system can be guaranteed in a weak sense as stated below.
\begin{theorem}\label{Proposition 9}
Consider the closed-loop system. Let $(w_1,v_1),(w_2,v_2)\in \mathcal {H}^{2+\theta,1+\frac{\theta}{2}}(\overline{Q}_{T})\times \mathcal {H}^{2+\theta,1+\frac{\theta}{2}}(\overline{Q}_{T})$ be two solutions of \eqref{eq: foced FPE} in $ Q_T$.
If $w_1(\overline{x},t)-w_1(\underline{x},t)=w_2(\overline{x},t)-w_2(\underline{x},t)$ in $[0,T]$, then $(w_1,v_1)=(w_2,v_2)$ in $\overline{Q}_T$.
\end{theorem}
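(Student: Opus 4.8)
The plan is to reduce the statement to two $L^2$ energy estimates: first for $W:=w_1-w_2$, and then, once $w_1=w_2$ is in hand, for $V:=v_1-v_2$. The crucial preliminary observation is that the total $w$-mass does not see the control. Integrating \eqref{eq: foced FPE ON} in $x$ and using the boundary conditions \eqref{eq: BCs of FPE} (which make the total flux vanish at $\underline{x}$ and $\overline{x}$) gives $\frac{\text{d}}{\text{d}t}\int_{\underline{x}}^{\overline{x}}w_i\,\text{d}x=\int_{\underline{x}}^{\overline{x}}\delta\,\text{d}x$ for $i=1,2$. Since both solutions share the datum $w^0$ and the same $\delta$, it follows that $\int_{\underline{x}}^{\overline{x}}w_1\,\text{d}x=\int_{\underline{x}}^{\overline{x}}w_2\,\text{d}x=:m(t)$, with $m(t)\geq\delta_0>0$ by \eqref{+28}. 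Combined with the standing hypothesis $w_1(\overline{x},t)-w_1(\underline{x},t)=w_2(\overline{x},t)-w_2(\underline{x},t)$, both the common denominator and the boundary-difference numerator in the two copies of \eqref{eq: control u} coincide, so the difference of the controls collapses to a bounded linear functional of $W$,
\[
u_1(t)-u_2(t)=\frac{1}{m(t)}\left(\int_{\underline{x}}^{\overline{x}}\alpha_1 W\,\text{d}x+\frac{k_0}{a}\int_{\underline{x}}^{\overline{x}}(ax+b)W\,\text{d}x\right),
\]
whence $|u_1(t)-u_2(t)|\leq C\|W(\cdot,t)\|$ uniformly on $[0,T]$, using $m\geq\delta_0$.

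Subtracting the two copies of \eqref{eq: foced FPE ON} then yields a linear parabolic equation for $W$,
\[
\frac{\partial W}{\partial t}=\beta W_{xx}-\frac{\partial}{\partial x}\big((\alpha_1-u_1)W\big)+\frac{\partial}{\partial x}\big((u_1-u_2)w_2\big),
\]
with $W(\cdot,0)=0$ and boundary conditions obtained by subtracting \eqref{eq: BCs of FPE}, namely $\beta W_x(\overline{x},t)-(\alpha_1(\overline{x})-u_1)W(\overline{x},t)+(u_1-u_2)w_2(\overline{x},t)=0$ and its analogue at $\underline{x}$. The core step is to multiply by $W$ and integrate over $(\underline{x},\overline{x})$. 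The key algebraic point is that the boundary terms generated by integration by parts cancel exactly, precisely because these boundary conditions are the subtracted flux traces, leaving
\[
\frac{1}{2}\frac{\text{d}}{\text{d}t}\|W\|^2=-\beta\|W_x\|^2+\int_{\underline{x}}^{\overline{x}}(\alpha_1-u_1)WW_x\,\text{d}x-(u_1-u_2)\int_{\underline{x}}^{\overline{x}}w_2 W_x\,\text{d}x.
\]
Since $\alpha_1-u_1$ and $w_2$ are bounded on $\overline{Q}_T$ (the solutions lie in $\mathcal{H}^{2+\theta,1+\frac{\theta}{2}}(\overline{Q}_T)$ and $m\geq\delta_0$) and $|u_1-u_2|\leq C\|W\|$, both integrals are absorbed by Young's inequality into $\tfrac{\beta}{2}\|W_x\|^2+C'\|W\|^2$, giving $\frac{\text{d}}{\text{d}t}\|W\|^2\leq 2C'\|W\|^2$. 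With $\|W(\cdot,0)\|=0$, Grönwall's inequality forces $W\equiv0$, i.e. $w_1=w_2$ on $\overline{Q}_T$.

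Finally, $w_1=w_2$ immediately gives $u_1=u_2$, so the two equations \eqref{eq: foced FPE OFF} for $v_1,v_2$ share identical coefficients (now genuinely linear, with no nonlocal term), identical boundary conditions, the same source $-\delta$, and the same datum $v^0$. The same energy argument applied to $V:=v_1-v_2$ — now simpler, as the extra $(u_1-u_2)w_2$ term is absent and the subtracted Robin traces again cancel — yields $\frac{\text{d}}{\text{d}t}\|V\|^2\leq C\|V\|^2$ with $V(\cdot,0)=0$, hence $V\equiv0$ and $v_1=v_2$, completing the proof. I expect the main obstacle to be exactly the nonlocal control term: the hypothesis on the boundary difference is what removes the boundary-trace contribution $\beta\big(W(\overline{x},t)-W(\underline{x},t)\big)/m(t)$ from $u_1-u_2$, which otherwise could not be dominated in the $L^2$ estimate; the remaining technical care lies in verifying the exact boundary cancellation and the uniform boundedness of $u_1$, $\alpha_1-u_1$, and $w_2$ afforded by the regularity class and the lower bound $m\geq\delta_0$.
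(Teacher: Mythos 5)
Your proposal is correct and follows essentially the same route as the paper: an $L^2$ energy estimate on $W=w_1-w_2$, using mass conservation and \eqref{+28} to bound the denominator of the control away from zero, the boundary-difference hypothesis to eliminate the $\beta\big(W(\overline{x},t)-W(\underline{x},t)\big)$ term so that $|u_1-u_2|\leq C\|W\|$, and then Young plus Gr\"onwall with $W(\cdot,0)=0$. The only cosmetic differences are which solution carries the control difference in the splitting of the convective term and that you spell out the (trivial, linear) energy argument for $v_1-v_2$ where the paper simply invokes linearity.
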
}
\begin{proof}
See {}{Appendix~\ref{App: uniqueness}}.
\end{proof}

\section{Stability Analysis}\label{Sec: Stability Analysis}
{}{We assess first the stability of the error dynamics~\eqref{eq: error dynamics 1} with the control given in \eqref{eq: stabilizer}.}
\begin{theorem}\label{theorem 12}
The regulation error $e(t)$ is determined by
\begin{equation}
  e(t)=e(0)\text{e}^{-k_0t}+\int_{0}^t\Gamma(t)\text{e}^{-k_0(t-s)}\text{d}s.\label{1125+4}
\end{equation}
Furthermore, if $|\Gamma(t)| \leq \Gamma_{\infty}$ with a positive constant $\Gamma_{\infty}$ for all $t > 0$, then
\begin{equation*}
|e(t)|\leq |e(0)|\text{e}^{-k_0t}+\frac{\Gamma_{\infty}}{k_0}\left(1-\text{e}^{-k_0t}\right),\forall t>0.
\end{equation*}
\end{theorem}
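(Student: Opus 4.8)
The plan is to recognize that, once the stabilizing control \eqref{eq: stabilizer} is substituted into the error dynamics \eqref{eq: error dynamics 1}, the regulation error obeys a scalar linear first-order ODE, which can be solved in closed form by the integrating-factor method. Combining \eqref{eq: error dynamics 1} and \eqref{eq: stabilizer} gives $\dot{e} = -k_0 e + \Gamma(t)$ with $e(0)=e_0$. Multiplying both sides by the integrating factor $\text{e}^{k_0 t}$ turns the left-hand side into $\frac{\text{d}}{\text{d}t}\bigl(\text{e}^{k_0 t}e(t)\bigr)=\text{e}^{k_0 t}\Gamma(t)$. Integrating from $0$ to $t$ and multiplying through by $\text{e}^{-k_0 t}$ then yields the variation-of-constants formula \eqref{1125+4} (with the integration variable inside the integral understood as $s$).

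For the bound, I would take absolute values in this explicit formula and use the triangle inequality together with the nonnegativity of the exponential kernel $\text{e}^{-k_0(t-s)}$, obtaining $|e(t)| \le |e(0)|\text{e}^{-k_0 t}+\int_0^t |\Gamma(s)|\text{e}^{-k_0(t-s)}\,\text{d}s$. Inserting the uniform bound $|\Gamma(s)|\le\Gamma_{\infty}$ and pulling the constant outside the integral reduces the task to evaluating $\int_0^t \text{e}^{-k_0(t-s)}\,\text{d}s = \frac{1}{k_0}\bigl(1-\text{e}^{-k_0 t}\bigr)$, which reproduces exactly the claimed estimate.

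Since this is a one-dimensional linear ODE driven by a bounded input, there is no genuine analytical obstacle here; the argument is a textbook application of Duhamel's principle. The only points requiring a little care are the bookkeeping of the integration variable (the $\Gamma(t)$ written inside the integral of \eqref{1125+4} should be read as $\Gamma(s)$) and confirming that $\Gamma(t)$ is at least locally integrable so that the Duhamel integral is well defined, which is immediate from the standing assumption $|\Gamma(t)|\le\Gamma_{\infty}$.
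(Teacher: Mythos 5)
Your proposal is correct and follows essentially the same route as the paper: substitute \eqref{eq: stabilizer} into \eqref{eq: error dynamics 1} to obtain the scalar linear ODE $\dot e=-k_0e+\Gamma(t)$, solve it by the integrating factor (variation of constants), and then derive the bound via the triangle inequality and $\int_0^t \text{e}^{-k_0(t-s)}\,\text{d}s=\frac{1}{k_0}\left(1-\text{e}^{-k_0t}\right)$. Your remark that the $\Gamma(t)$ inside the integral of \eqref{1125+4} should be read as $\Gamma(s)$ is a fair observation about a typo in the statement.
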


{\begin{proof} By \eqref{eq: error dynamics 1} and \eqref{eq: stabilizer}, it follows
\begin{equation*}
  \dfrac{\diff{e}}{\diff{t}} = -k_0e(t) + \Gamma(t), \ \ e(0) = e_0.
\end{equation*}
Solving this first-order linear ordinary differential equation, we can obtain the result given in~\eqref{1125+4}.
\end{proof}}

{}{To assure the closed-loop stability, we need to prove that the internal dynamics composed of \eqref{eq: foced FPE}, \eqref{eq: BCs of FPE}, \eqref{eq: output}, and \eqref{eq: control u} are stable provided Theorem~\ref{theorem 12} holds true. Towards this aim, we need the properties stated in the following 2~propositions. The first one is the conservativity of mass of the considered problem given below.
}

\begin{proposition}\label{prop. 8}
Consider the closed-loop system. For any $T>0$, let $(w,v)\in \mathcal {H}^{2+\theta,1+\frac{\theta}{2}}(\overline{Q}_{T})\times \mathcal {H}^{2+\theta,1+\frac{\theta}{2}}(\overline{Q}_{T})$ satisfy \eqref{eq: foced FPE} a.e. in $ Q_T$. Then for any $t\in [0,T]$, it holds:
\begin{enumerate}
\item [(i)]
{}{$\displaystyle\int_{\underline{x}}^{\overline{x}}w(x,t)\text{d}x=\displaystyle\int_{\underline{x}}^{\overline{x}}w^0(x)\text{d}x+\int_{0}^t\int_{\underline{x}}^{\overline{x}}\delta(x,s)\text{d}x\text{d}s;$}
\item [(ii)]
{}{$\displaystyle\int_{\underline{x}}^{\overline{x}}v(x,t)\text{d}x=\displaystyle\int_{\underline{x}}^{\overline{x}}v^0(x)\text{d}x-\int_{0}^t\int_{\underline{x}}^{\overline{x}}\delta(x,s)\text{d}x\text{d}s.
$}
\end{enumerate}
Moreover, it holds:
\begin{align*}
&N_{\text{agg}}(t) =\int_{\underline{x}}^{\overline{x}}(w(x,t)+v(x,t))\text{d}x=\int_{\underline{x}}^{\overline{x}} (w^0(x)+ v^0(x))\text{d}x.
\end{align*}
\end{proposition}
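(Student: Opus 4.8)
The plan is to treat each of the two forced Fokker--Planck equations in \eqref{eq: foced FPE} as a conservation law and integrate it in space over $(\underline{x},\overline{x})$, exploiting the fact that the right-hand side (apart from the source $\pm\delta$) is already in divergence form. Integrating the divergence term by the fundamental theorem of calculus produces only the boundary values of the flux, and these vanish identically by virtue of the no-flux boundary conditions \eqref{eq: BCs of FPE}. This collapses each PDE to a scalar ODE for the total mass, whose integration in time yields (i) and (ii); summing the two then gives the mass-conservation identity.

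Concretely, for the ON-state I would first observe that the regularity $(w,v)\in\mathcal{H}^{2+\theta,1+\frac{\theta}{2}}(\overline{Q}_{T})\times\mathcal{H}^{2+\theta,1+\frac{\theta}{2}}(\overline{Q}_{T})$ makes $w_t$ continuous on $\overline{Q}_T$, so that differentiation under the integral sign is legitimate and
\[
\frac{\text{d}}{\text{d}t}\int_{\underline{x}}^{\overline{x}} w(x,t)\,\text{d}x=\int_{\underline{x}}^{\overline{x}} w_t(x,t)\,\text{d}x.
\]
Substituting \eqref{eq: foced FPE ON} and applying the fundamental theorem of calculus to the divergence term gives
\[
\frac{\text{d}}{\text{d}t}\int_{\underline{x}}^{\overline{x}} w\,\text{d}x=\left[\beta w_x-(\alpha_{1}(x)-u(t))w\right]_{\underline{x}}^{\overline{x}}+\int_{\underline{x}}^{\overline{x}}\delta(x,t)\,\text{d}x.
\]
The bracketed flux is exactly the quantity that the boundary conditions \eqref{eq: BCs of FPE} force to vanish at both $x=\underline{x}$ and $x=\overline{x}$ (regardless of the precise nonlocal value of $u(t)$), so the first term is zero and $\frac{\text{d}}{\text{d}t}\int w\,\text{d}x=\int\delta\,\text{d}x$. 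Integrating in time from $0$ to $t$ and using the initial datum $w(\cdot,0)=w^0$ yields (i). The OFF-state identity (ii) follows in the same way from \eqref{eq: foced FPE OFF} and the matching boundary conditions, the only change being the sign of the source, which produces $-\int\delta\,\text{d}x$; adding (i) and (ii) then cancels the two time integrals of $\int\delta$ and gives $N_{\text{agg}}(t)=\int_{\underline{x}}^{\overline{x}}(w^0+v^0)\,\text{d}x$.

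Since the computation is a routine conservation-law argument, the points I would treat most carefully are the two analytic justifications: (a) the interchange of $\frac{\text{d}}{\text{d}t}$ with $\int\,\text{d}x$, and (b) the application of the fundamental theorem of calculus to the flux, both of which rest on the $\mathcal{H}^{2+\theta,1+\frac{\theta}{2}}$ regularity (continuity of $w_t$ and $C^1$-in-$x$ regularity of the flux, using $\alpha_0,\alpha_1\in C^1([\underline{x},\overline{x}])$). A secondary subtlety is that the hypothesis asserts only that \eqref{eq: foced FPE} holds a.e.\ in $Q_T$; I would make explicit that, as a solution of the closed-loop system, $(w,v)$ also satisfies \eqref{eq: BCs of FPE}, which is what eliminates the boundary flux. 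As an alternative route for the particular solution constructed in Theorem~\ref{well-posedness}, one may instead pass to the limit $n\to\infty$ in the mass identities of Lemma~\ref{well-posedness of iterating equaitons}, using uniform convergence $w_n\to w$ and $v_n\to v$ on $\overline{Q}_T$ together with dominated convergence for $\int_0^t\int_{\underline{x}}^{\overline{x}}\delta_n\,\text{d}x\,\text{d}s\to\int_0^t\int_{\underline{x}}^{\overline{x}}\delta\,\text{d}x\,\text{d}s$ (valid since $\delta_n\to\delta$ a.e.\ with $\|\delta_n\|_\infty\le 1+\|\delta\|_\infty$); this avoids invoking the boundary conditions for the limit directly.
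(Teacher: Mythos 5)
Your proposal is correct and follows exactly the paper's own (very terse) argument: integrate each equation of \eqref{eq: foced FPE} over $(\underline{x},\overline{x})$, note that the flux terms vanish by the boundary conditions \eqref{eq: BCs of FPE}, and integrate the resulting scalar identity in time. The additional care you take with differentiating under the integral sign and the a.e.\ hypothesis is a sensible elaboration of the same route, not a different one.
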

\begin{proof}
The result can be obtained directly by taking integration of \eqref{eq: foced FPE} and using the boundary conditions \eqref{eq: BCs of FPE}.
\end{proof}


{}{In order to establish the stability of the closed-loop system, we need the following property on the solutions of \eqref{eq: foced FPE}, \eqref{eq: BCs of FPE}, \eqref{eq: output}, \eqref{eq: control u}, \eqref{eq: control Gamma}, \eqref{eq: error dynamics 1}, and \eqref{eq: stabilizer}.}

\begin{proposition}\label{prop. 7}
Consider the closed-loop system. For any $T>0$, let $(w,v)\in \mathcal {H}^{2+\theta,1+\frac{\theta}{2}}(\overline{Q}_{T})\times \mathcal {H}^{2+\theta,1+\frac{\theta}{2}}(\overline{Q}_{T})$ be a solution of \eqref{eq: foced FPE} in $ Q_T$. Then for any $t\in [0,T]$, the following estimates hold
\begin{enumerate}
\item [(i)]
{}{$
\|w(\cdot,t)\|_1\leq  \|w^0\|_1
+\displaystyle\int_0^t\int_{\underline{x}}^{\overline{x}}\delta(x,s) \text{sgn} (w) \text{d}x\text{d}s;$}
\item [(ii)]{}{$\|v(\cdot,t)\|_1\leq  \|v^0\|_1
-\displaystyle\int_0^t\int_{\underline{x}}^{\overline{x}}\delta(x,s) \text{sgn} (v) \text{d}x\text{d}s,$}
\end{enumerate}
{}{where $\text{sgn} (\eta)=1$ if $\eta>0$, $\text{sgn} (\eta)=-1$ if $\eta<0$, and $\text{sgn} (\eta)=0$ if $\eta=0$.}
\end{proposition}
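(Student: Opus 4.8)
The plan is to establish both $L^{1}$ bounds by the standard sign-regularization technique: test each equation against a smooth approximation of $\text{sgn}$, integrate by parts so that the zero-flux boundary conditions kill the boundary terms, discard the diffusion term by its favorable sign, and pass to the limit. First I would fix, for $\epsilon>0$, an increasing function $\text{sgn}_{\epsilon}\in C^{1}(\mathbb{R})$ with $|\text{sgn}_{\epsilon}|\le 1$, $\text{sgn}_{\epsilon}'\ge 0$, and $\text{sgn}_{\epsilon}(s)\to\text{sgn}(s)$ pointwise as $\epsilon\to0$ (e.g. $\text{sgn}_{\epsilon}(s)=s/\sqrt{s^{2}+\epsilon^{2}}$), and let $\Phi_{\epsilon}$ be its primitive with $\Phi_{\epsilon}(0)=0$, so that $\Phi_{\epsilon}(s)\to|s|$ with $0\le\Phi_{\epsilon}(s)\le|s|$. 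Since $(w,v)\in\mathcal{H}^{2+\theta,1+\frac{\theta}{2}}(\overline{Q}_{T})\times\mathcal{H}^{2+\theta,1+\frac{\theta}{2}}(\overline{Q}_{T})$, the functions $w,w_{x},v,v_{x}$ are continuous on $\overline{Q}_{T}$, and $u$ is bounded on $[0,T]$ (its denominator is bounded below by $\delta_{0}$ thanks to Proposition~\ref{prop. 8} and \eqref{+28}), so all the manipulations below are justified for a.e. $t$.

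For \eqref{eq: foced FPE ON} I would multiply by $\text{sgn}_{\epsilon}(w)$ and integrate in $x$, giving
$$\frac{\text{d}}{\text{d}t}\int_{\underline{x}}^{\overline{x}}\Phi_{\epsilon}(w)\,\text{d}x=\int_{\underline{x}}^{\overline{x}}\text{sgn}_{\epsilon}(w)\,\partial_{x}\big(\beta w_{x}-(\alpha_{1}-u)w\big)\,\text{d}x+\int_{\underline{x}}^{\overline{x}}\text{sgn}_{\epsilon}(w)\,\delta\,\text{d}x.$$
Integrating the flux term by parts produces the boundary contribution $\big[\text{sgn}_{\epsilon}(w)\big(\beta w_{x}-(\alpha_{1}-u)w\big)\big]_{\underline{x}}^{\overline{x}}$, which vanishes because \eqref{eq: BCs of FPE} forces the flux $\beta w_{x}-(\alpha_{1}-u)w$ to be zero at both endpoints. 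Hence
$$\frac{\text{d}}{\text{d}t}\int_{\underline{x}}^{\overline{x}}\Phi_{\epsilon}(w)\,\text{d}x=-\beta\int_{\underline{x}}^{\overline{x}}\text{sgn}_{\epsilon}'(w)\,w_{x}^{2}\,\text{d}x+\int_{\underline{x}}^{\overline{x}}\text{sgn}_{\epsilon}'(w)\,(\alpha_{1}-u)\,w\,w_{x}\,\text{d}x+\int_{\underline{x}}^{\overline{x}}\text{sgn}_{\epsilon}(w)\,\delta\,\text{d}x,$$
where the first term is nonpositive since $\beta>0$ and $\text{sgn}_{\epsilon}'\ge 0$, and may be discarded after integrating in time.

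The hard part will be the limit $\epsilon\to0$ in the convection term. Here I would use that $s\,\text{sgn}_{\epsilon}'(s)$ is bounded uniformly in $\epsilon$ and tends to $0$ as $\epsilon\to0$ for every $s$; combined with the boundedness of $(\alpha_{1}-u)w_{x}$ on $\overline{Q}_{T}$, dominated convergence yields $\int_{0}^{t}\int_{\underline{x}}^{\overline{x}}\text{sgn}_{\epsilon}'(w)(\alpha_{1}-u)w w_{x}\,\text{d}x\,\text{d}s\to0$. Integrating the identity above over $(0,t)$, dropping the nonpositive diffusion term, and letting $\epsilon\to0$ (using $\Phi_{\epsilon}(w)\to|w|$ and $\text{sgn}_{\epsilon}(w)\to\text{sgn}(w)$ together with dominated convergence for the $\delta$-term) gives
$$\|w(\cdot,t)\|_{1}-\|w^{0}\|_{1}\le\int_{0}^{t}\int_{\underline{x}}^{\overline{x}}\delta(x,s)\,\text{sgn}(w)\,\text{d}x\,\text{d}s,$$
which is assertion (i).

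Assertion (ii) follows by the identical argument applied to \eqref{eq: foced FPE OFF}, with $\alpha_{0}$ replacing $\alpha_{1}$ and the source $-\delta$ replacing $+\delta$; the sign change in the source is exactly what produces the minus sign in front of the integral in the bound. The decisive points are the vanishing of the boundary terms via the zero-flux boundary conditions and the disappearance of the convection term in the sign limit, while the diffusion term never causes trouble because of its sign. This argument mirrors the $L^{1}$ estimate \eqref{eq: lemma 1 sequence} already obtained for the iterating equations in Lemma~\ref{well-posedness of iterating equaitons}.
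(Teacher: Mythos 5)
Your proof is correct, and it reaches the estimate by a genuinely different route than the paper. The paper does not argue on the closed-loop solution directly: it first proves the same $L^{1}$ bound for the iterating approximations $(w_n,v_n)$ (Step~3 of Appendix~\ref{Appendix: Lemma 1}, using the $C^2$ regularization $\rho_\varepsilon$ of $|\cdot|$, Young's inequality to absorb the convection term into the diffusion term, the bound $\|w_n\sqrt{\rho_\varepsilon''(w_n)}\|^2\le \tfrac{3}{2}\varepsilon(\overline{x}-\underline{x})$, and Gronwall's inequality before sending $\varepsilon\to 0$), and then obtains Proposition~\ref{prop. 7} by letting $n\to\infty$ in \eqref{eq: lemma 1 sequence}. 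You instead run the sign-regularization argument directly on $(w,v)$ and dispose of the convection term by the elementary observation that $s\,\text{sgn}_{\epsilon}'(s)$ is bounded uniformly in $\epsilon$ and vanishes pointwise, so dominated convergence kills it with no Young/Gronwall step. Your version buys two things: it is self-contained (no appeal to the iteration scheme), and it applies verbatim to any solution with the stated $\mathcal {H}^{2+\theta,1+\frac{\theta}{2}}$ regularity, whereas the paper's limit argument as written covers only the solution constructed by iteration and also sidesteps a delicate point, namely that $\text{sgn}(w_n)$ need not converge to $\text{sgn}(w)$ on the set where $w$ vanishes. The one extra ingredient your route requires is the boundedness of $u(t)$ on $[0,T]$, which you correctly justify from Proposition~\ref{prop. 8}, condition \eqref{+28}, and the continuity of $w$ on $\overline{Q}_{T}$.
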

{}{\begin{proof}
By Lemma~\ref{well-posedness of iterating equaitons} and the regularity of $w_n$ and $v_n$, it suffices to let $n \rightarrow \infty$ in \eqref{eq: lemma 1 sequence}, which leads to the desired result.
\end{proof}}

As it is guaranteed by Proposition~\ref{prop. 8} that the closed-loop system is also conservative in terms of the total number of TCLs in the population, {}{there must be a positive constant $M$ such that $\left|\iint_Q\delta(x,{t}) \text{d}x\text{d}t\right|<M$ for any Lebesgue measurable set $Q\subset Q_{\infty}$.} We have then the following result regarding the stability of the internal dynamics in closed loop.
\begin{theorem}\label{stability}
Consider the internal dynamics of the system. Let $(w,v)\in \mathcal {H}^{2+\theta,1+\frac{\theta}{2}}(\overline{Q}_{\infty})\times \mathcal {H}^{2+\theta,1+\frac{\theta}{2}}(\overline{Q}_{\infty})$ satisfy \eqref{eq: foced FPE} a.e. in $ \overline{Q}_\infty$. Assume that {}{there exists a positive constant $M$ such that} {}{$\left|\iint_Q\delta(x,{}{t}) \text{d}x\text{d}t\right|<M$ for any Lebesgue measurable set $Q\subset Q_{\infty}$.} Then for any $t\in [0,+\infty)$, it holds:
\begin{enumerate}
\item [(i)]
{}{$
\|w(\cdot,t)\|_1\leq  \|w^0\|_1
+{}{2M}<+\infty;$}
\item [(ii)]{}{$\|v(\cdot,t)\|_1\leq  \|v^0\|_1+
{}{2M}<+\infty.$}
\end{enumerate}
\end{theorem}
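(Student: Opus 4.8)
The plan is to reduce the claim to the $L^1$-estimates already furnished by Proposition~\ref{prop. 7} and then to control the resulting space-time integrals of $\delta$ through the measurable-set hypothesis. By Proposition~\ref{prop. 7}, for every $t\in[0,+\infty)$ one has
$$\|w(\cdot,t)\|_1\leq \|w^0\|_1+\int_0^t\int_{\underline{x}}^{\overline{x}}\delta(x,s)\,\text{sgn}(w)\,\text{d}x\text{d}s,$$
together with the analogous bound for $\|v(\cdot,t)\|_1$ carrying the opposite sign in front of the integral. Thus the entire task is to show that this space-time integral is bounded in absolute value by $2M$, uniformly in $t$.

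To do so, I would use the fact that $w\in\mathcal{H}^{2+\theta,1+\frac{\theta}{2}}(\overline{Q}_\infty)$ is continuous, so that the sign sets
$$Q_+:=\{(x,s)\in Q_t:\ w(x,s)>0\},\qquad Q_-:=\{(x,s)\in Q_t:\ w(x,s)<0\}$$
are Lebesgue measurable subsets of the cylinder $Q_t=(\underline{x},\overline{x})\times(0,t)\subset Q_\infty$, on which $\text{sgn}(w)$ equals $+1$ and $-1$, respectively, while it vanishes on the complement $\{w=0\}$. Splitting the integral accordingly gives
$$\int_0^t\int_{\underline{x}}^{\overline{x}}\delta(x,s)\,\text{sgn}(w)\,\text{d}x\text{d}s=\iint_{Q_+}\delta(x,s)\,\text{d}x\text{d}s-\iint_{Q_-}\delta(x,s)\,\text{d}x\text{d}s.$$

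Applying the hypothesis $\left|\iint_Q\delta\,\text{d}x\text{d}t\right|<M$ separately to the two measurable sets $Q_+,Q_-$ and using the triangle inequality then yields
$$\left|\int_0^t\int_{\underline{x}}^{\overline{x}}\delta(x,s)\,\text{sgn}(w)\,\text{d}x\text{d}s\right|\leq\left|\iint_{Q_+}\delta\right|+\left|\iint_{Q_-}\delta\right|<2M,$$
uniformly in $t$, which combined with the Proposition~\ref{prop. 7} estimate gives (i); the identical argument with the sign sets of $v$ gives (ii). I do not expect a genuine obstacle: the only items needing care are the measurability of the sign sets, guaranteed by the H\"{o}lder regularity of $(w,v)$, and the recognition that the set-hypothesis must be invoked once per sign, which is exactly what produces the constant $2M$ in place of $M$.
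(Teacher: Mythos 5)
Your proposal is correct and follows essentially the same route as the paper: invoke Proposition~\ref{prop. 7}, split the space-time integral of $\delta\,\text{sgn}(w)$ over the measurable sign sets $Q_+$ and $Q_-$, and apply the hypothesis to each set together with the triangle inequality to obtain the constant $2M$. No gaps.
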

\begin{proof}
We asses only the claim~(i). The proof of the claim~(ii) can follow the same line. Indeed, {}{for $t=0$, we have yet $\|w(\cdot,0)\|_1= \|w^0\|_1\leq \|w^0\|_1
+2{}{M}<+\infty$. For any $t>0$, let $Q^+=\{(x,s)\in \overline{Q}_{{}{t}};w(x,s)>0\} $ and $Q^-=\{(x,s)\in \overline{Q}_{{}{t}};w(x,s)<0\} $.}
 %
{}{Then by assumptions and Proposition \ref{prop. 7}, we have
\begin{align*}
\|w(\cdot,t)\|_1\leq&  \|w^0\|_1
+\displaystyle\int_0^t\int_{\underline{x}}^{\overline{x}}\delta(x,s) \text{sgn} (w) \text{d}x\text{d}s\\
=&\|w^0\|_1+\iint_{Q^+}\delta(x,s)  \text{d}x\text{d}s-\iint_{Q^-}\delta(x,s)  \text{d}x\text{d}s\\
\leq &\|w^0\|_1+\left|\iint_{Q^+}\delta(x,s)  \text{d}x\text{d}s\right|+\left|\iint_{Q^-}\delta(x,s)\text{d}x\text{d}s\right|\\
\leq & \|w^0\|_1+2M<+\infty.
\end{align*}}
\end{proof}

\section{Simulation Study}\label{Sec: Simulation Study}
To validate the developed control scheme and illustrate the performance of the system, we conduct a simulation study on a benchmark problem, corresponding to a population of air-conditioners, which has been used in the validation of different solutions in the literature, e.g., {\cite{Callaway:2009,moura2013modeling,Ghanavati:2018,Moura:2014}}. The parameters of the system and the controllers are listed in Table~\ref{Tab: system parameters}. The variation of thermal capacitance creates a heterogenous TCL population.
\begin{table}[thpb]\
\caption{System parameters}\label{Tab: system parameters}
\centering
\begin{tabular}{|l|c|c|c|c|c|}
\hline
Parameters & Description [Unit] & Value\\
\hline
$R$ & Thermal resistance [$^\circ$C/kW] & 2\\
$C$ & Thermal capacitance [kWh/$^\circ$C] & $ N(10,3)$\\
$P$ & Thermal power [kW] & 14\\
$x_{ie}$ & Ambient temperature [$^\circ$C] & 32\\
$\eta$ & Coefficient of performance & 2.5 \\
$\beta$ & Diffusivity & 0.1 \\
\hline
$Q_{\text{mpc}}$ & Weighting coefficient on tracking error $e_{\text{mpc}}$ & 100\\
$R_{\text{mpc}}$ & Weighting coefficient on control input $u_{\text{mpc}}$ & $ N(10,2)$\\
$M$ & Prediction horizon & 5 \\
$\underline{\Delta x}$, $\overline{\Delta x}$ & Temperature deadband width [$^\circ$C] & 0.5\\
$a$ & Weighting function coefficient & -1 \\
$b$ & Weighting function coefficient & -20 \\
$k_0$ & Control gain & 5600\\
\hline
\end{tabular}
\end{table}

The setting used in the simulation is a population of 1000 appliances controlled by MPC schemes. The simulation is performed in a normalized time scale with respect to the nominal time constant of the appliances represented by $RC$ (=~20h). The results related to power consumptions are also presented in quantities normalized by the maximal total power consumption of the population. {}{Note that taking $b = -20$ in the weighting function used in the output function defined in \eqref{eq: output} has the effect of moving the data around the center of the operation range. Our experiment has shown that this choice would facilitate the controller tuning.}

First, it is known that due to the phenomenon of \emph{cold load pickup} \cite{Ihara1981}, a step variation of temperature reference in a large population of TCLs may induce high power demand peaks. To avoid this problem, we have adopted a decentralized desynchronization mechanism that combines adding independently a weighting coefficient and a delay to the reference signal at the level of MPC for individual TCLs \cite{laparra2019}. To illustrate the effect of this desynchronized MPC scheme, we tested the response of the population to a step change of the reference from 20.5~$^\circ$C to 19~$^\circ$C occurred at $t = 15$. The MPCs are with a weighting coefficient $R_{\text{mpc}}$ from a normally distributed random variable restricted in the interval [6, 14] and an additional delay uniformly distributed in [0, 5] to the reference signal. It can be seen from Fig.~\ref{Fig: Aggregate PW} that the desynchronized MPC scheme can considerably reduce the peak power demand.

\begin{figure}[htpb]\
  \centering
  \includegraphics[scale=0.2]{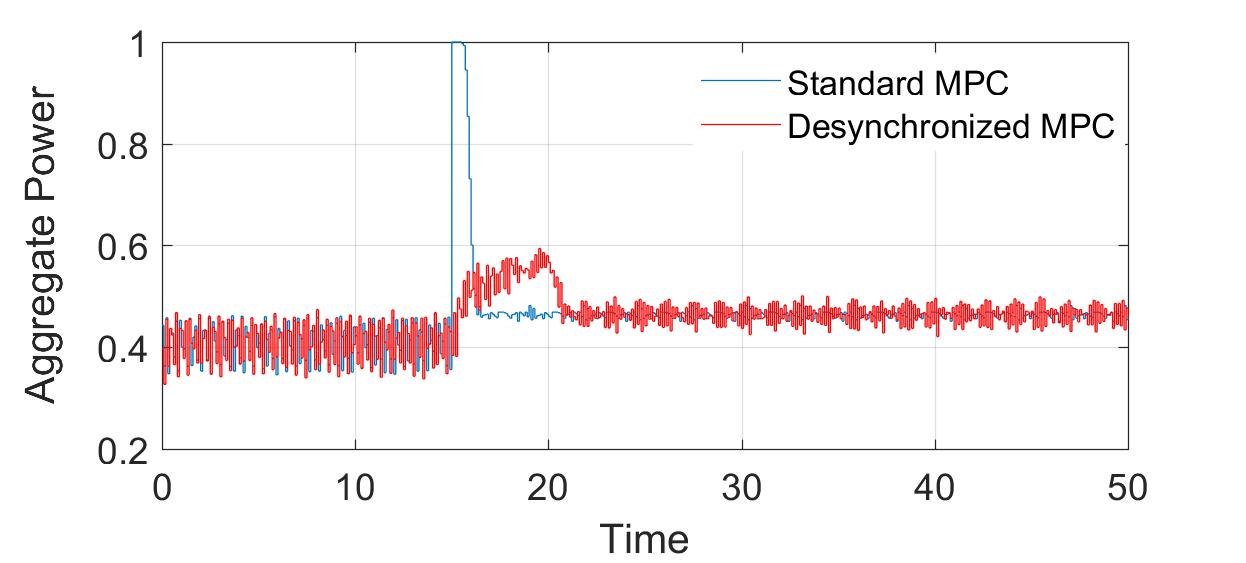}
  \caption{{}{Comparison of aggregate power of the population controlled by standard MPC and desynchronized MPC schemes at the TCL level.}
           }\label{Fig: Aggregate PW}
\end{figure}

The second part of simulation study is to determine the diffusion coefficient, $\beta$, of the Fokker-Planck equations based on the experiments. The diffusivity is estimated by applying the algorithm developed in \cite{Moura:2014} to the data generated by numerical simulations, which gives a value $\beta=0.1$. To validate this result, we proceed by first generating the data $\delta(x,t)$ for the population of TCLs specified in Table~\ref{Tab: system parameters} with MPC at the TCL level. The numerical solutions of the Fokker-Planck equations with the generated data $\delta(x,t)$ as the inputs corresponding to different values of $\beta$ are obtained by the forward Euler method \cite{hildebrand1987introduction}. The solution surfaces of the Fokker-Planck equations, $w(x,t)$ and $v(x,t)$, with $\beta = 0.1$ are shown in Fig.~\ref{Fig: Distribution}. We compute then the normalized mean error in its absolute value of the number of the TCLs in ON and OFF states between the generated data and that given by the solution of the Fokker-Planck equations with different diffusion coefficient varying from 0 to 0.2. As can be seen from Fig.~\ref{Fig: beta} that $\beta = 0.1$ is an adequate estimate. It can also be observed that the errors are not sensitive to diffusion coefficient if it is smaller than 0.1. This is in accordance with the results reported in the literature (see, e.g., \cite{Moura:2014}).

\begin{figure}[htpb]\
  \centering
  \subfigure[]{\label{Fig: Distribution}\includegraphics[scale=0.2]{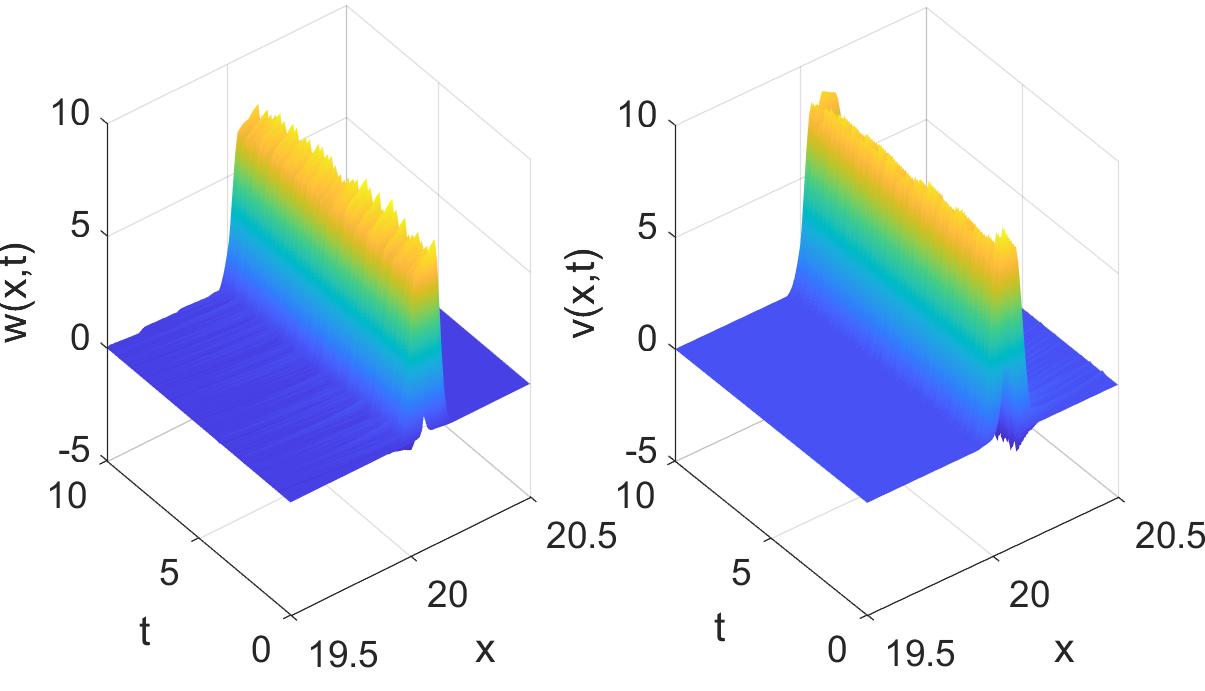}}
  \subfigure[]{\label{Fig: beta}\includegraphics[scale=0.2]{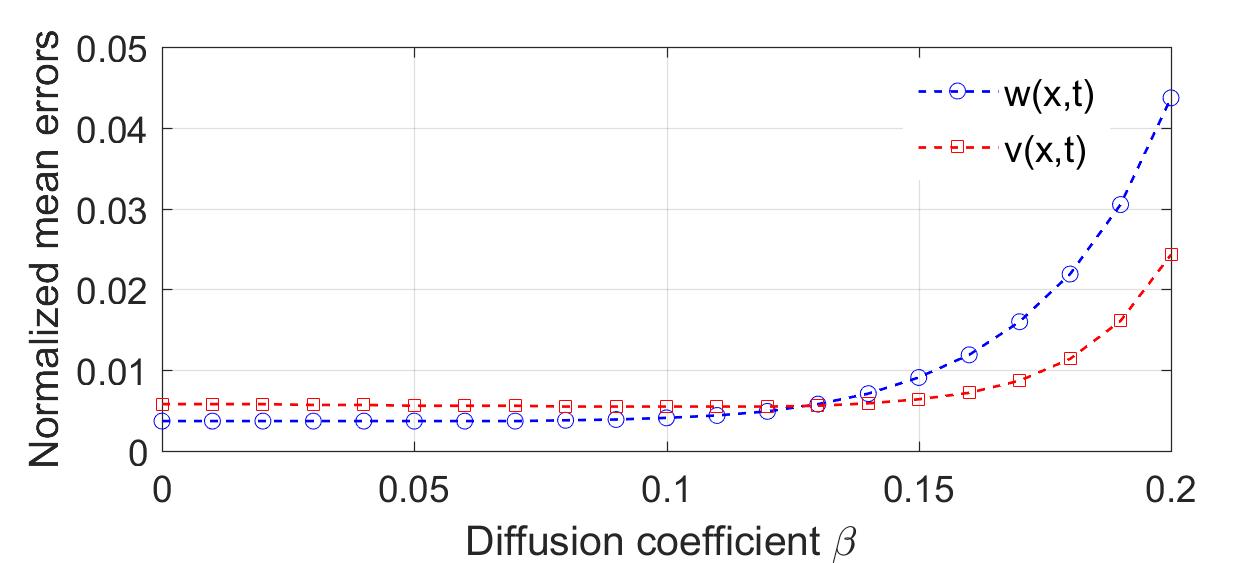}}
  \caption{{}{Experimental validation of the estimate of diffusion coefficient:
           (a) solution surfaces of the distribution of TCLs in ON and OFF states: $w(x,t)$ and $v(x,t)$;
           (b) normalized mean errors of the number of TCLs in ON and OFF states between the measured value and the one obtained by the solution of Fokker-Planck equations.
           }}
\end{figure}

{}{In the last part of simulation study, we test the developed aggregate power control algorithm with} a case where the desired aggregate power changes from 0.41 to 0.47 at $t = 15$ (all in the normalized scale), which corresponds to a variation of the reference temperature from 20.5$^\circ$C to 19$^\circ$C. {\cite{Callaway:2009,Bashash2013,moura2013modeling}}. The desired aggregate power and the actual power consumption of the population are shown in Fig.~\ref{Fig: Power}. The control signal and the regulation errors are depicted in Fig.~\ref{Fig: Command} and Fig.~\ref{Fig: Error}, respectively. The results show that the developed control scheme performances well with regulation errors lower than 2.5\% of the maximal total power consumption at the steady regime.
In order to avoid the chartering induced by fast reference variations, the reference sent to the TCLs is the average of the values over the last 10 iterations. The obtained temperature reference is shown in Fig.~\ref{Fig: Reference}. Figure~\ref{Fig: Temperature100} illustrates the temperature evolution of 100 randomly selected TCLs. It can be seen that all the TCLs follow well the reference while respecting the temperature deadband. {}{Finally, it is worth noting that in the simulation we used a population containing 1000 TCLs. As the bigger is the size of the population, the more accurate is the continuum model, along with the fact that the switching of the TCLs is considered unknown as specified in \eqref{eq: error dynamics 1}, the simulation results conformed the robustness of the developed control scheme.}

\begin{figure}[htpb]\
  \centering
  \subfigure[]{\label{Fig: Power}\includegraphics[scale=0.2]{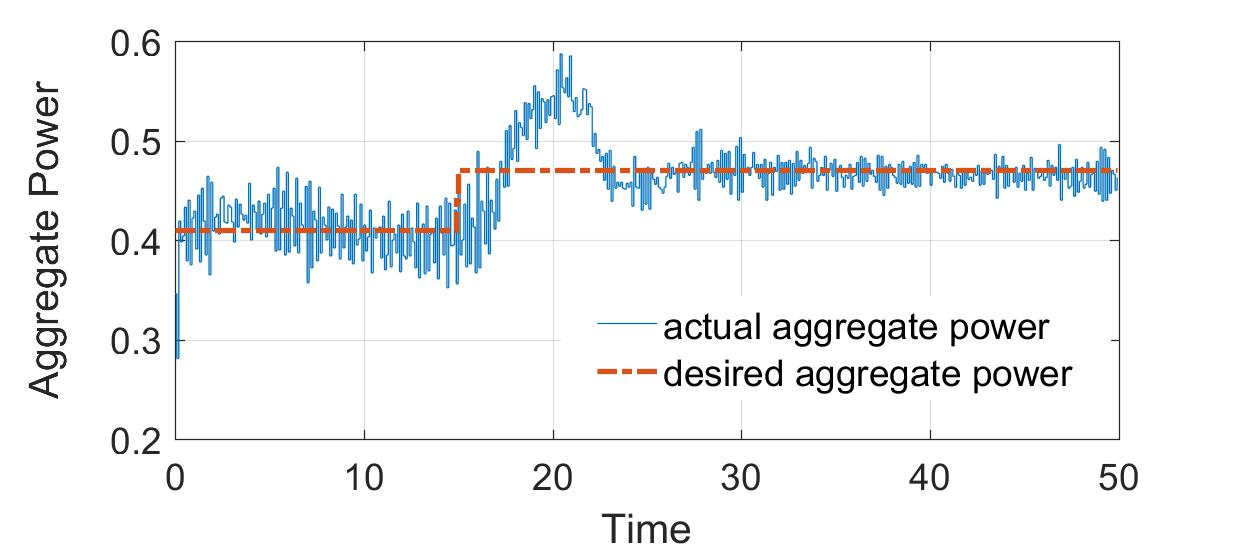}}
  \subfigure[]{\label{Fig: Command}\includegraphics[scale=0.2]{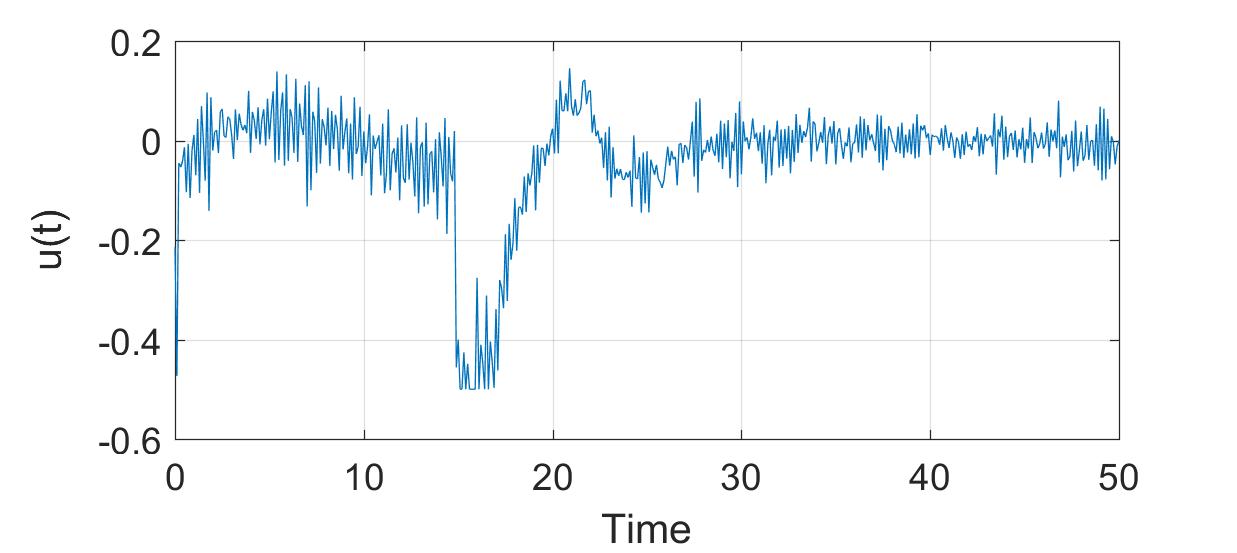}}
  \subfigure[]{\label{Fig: Error}\includegraphics[scale=0.2]{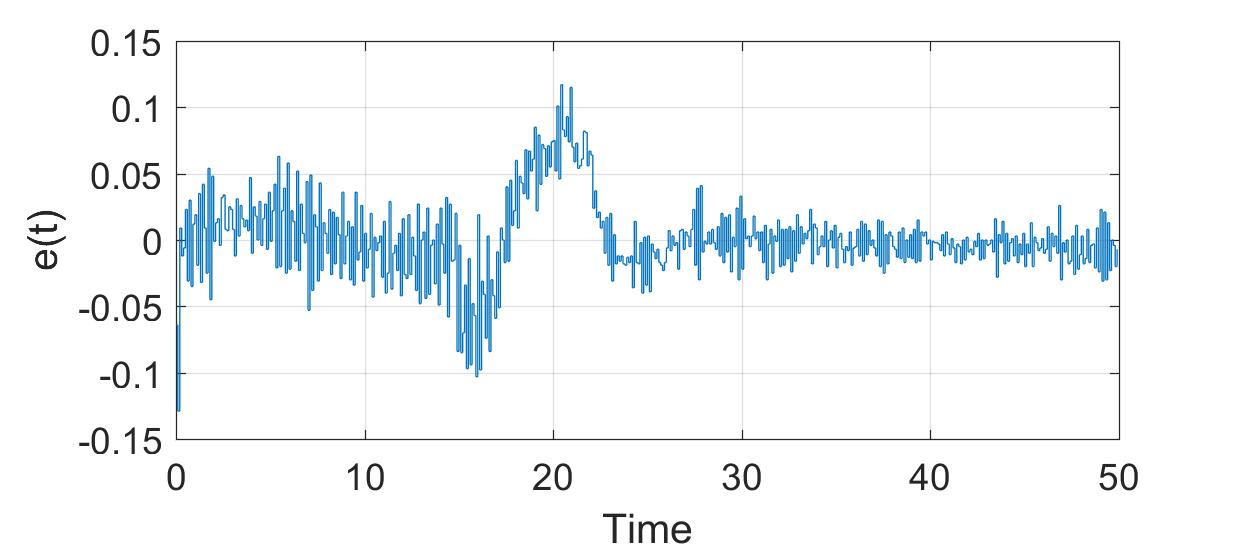}}
  \caption{Dynamics of the TCL population:
           (a) aggregate power consumptions;
           (b) control signal;
           (c) regulation error.
           }
\end{figure}

\begin{figure}[t]\
  \centering
  \subfigure[]{\label{Fig: Reference}\includegraphics[scale=0.2]{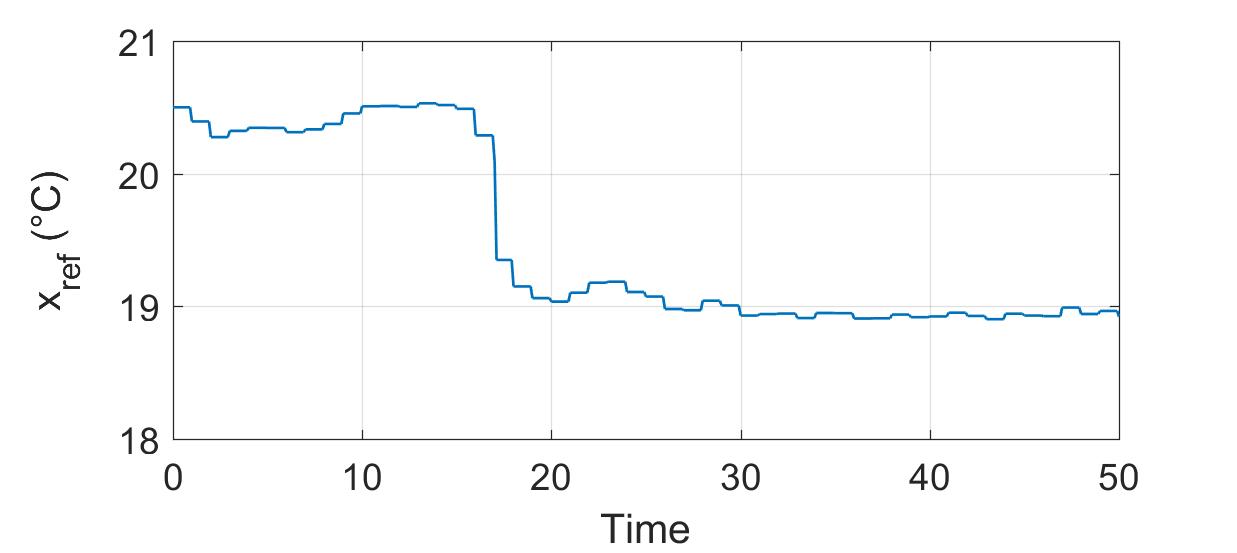}}
  \subfigure[]{\label{Fig: Temperature100}\includegraphics[scale=0.2]{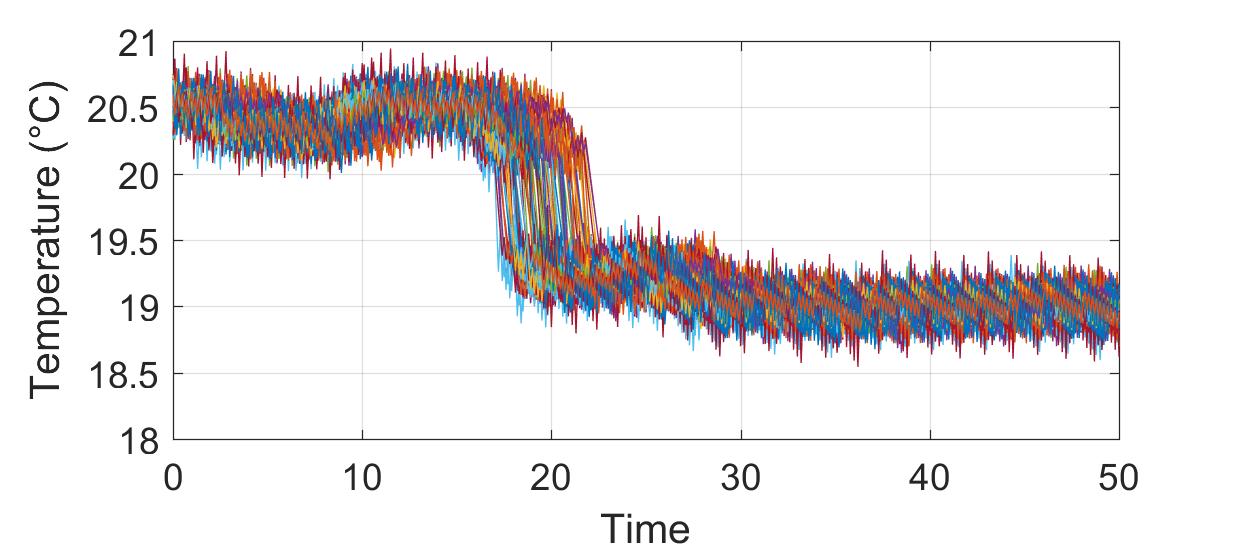}}
  \caption{Temperature evolution:
           (a) reference temperature;
           (b) temperature variations of 100 samples of TCLs.
           }
\end{figure}


\section{Conclusion}\label{Sec: Conclusion}
In this work, we have developed an input-output linearization-based scheme for aggregate power control of heterogenous thermal thermostatically controlled load (TCL) populations governed by a pair of Fokker-Planck equations coupled via in-domain actions representing the operation of model predictive control (MPC) at the level of individual TCLs. Well-posedness and stability analysis on the closed-loop system is conducted. A simulation study is carried out with a benchmark setting, and the results illustrated the validity and the performance of the proposed method.

It should be noted that the present work considered only a set-point regulation problem. While as the proposed linearization control leads to a system with finite-dimensional linear input-output dynamics, it is straightforward to extend this scheme to aggregate power tracking by using the well-established techniques for tracking control of finite-dimensional systems {\cite{Khalil:2002,Krstic:1995,Isidori:1995,Levine:2009}}. Finally, the control algorithm developed in this work can be applied to TCL populations operated by other control schemes at the level of individual TCLs, in particular the variations of deadband control. The main difference will be the way with which the PDEs are coupled, which will lead to settings with different boundary conditions and in-domain inputs. Hence, the well-posedness and the stability of the closed-loop systems have to be assessed, which can also be conducted following the approach used in this work.

{{}
\section*{Acknowledgements}
The authors thank the anonymous reviewers for their constructive comments, which helped to improve the quality of this work and the presentation of the paper.
}

{{}
\section{Appendix}
\subsection{Notations on $\mathcal {H}^{l}([\underline{x},\overline{x}])$ and $\mathcal {H}^{l,l/2}(\overline{Q}_T)$}\label{Appendix: notation}
}
We introduce first the following definitions of function spaces given in \cite[Chapter~I]{Ladyzenskaja:1968}, which are used in the subsequent development. Let $T>0$ and $l$ be a nonintegral positive number. {}{$\mathcal {H}^{l}([\underline{x},\overline{x}])$} denotes the Banach space whose elements are continuous functions $u(x)$ in $[\underline{x},\overline{x}]$, having continuous derivatives up to order $[l]$ inclusively and a finite value for the quantity
\begin{align}
    &|u|^{(l)}_{(\underline{x},\overline{x})}=\langle u\rangle^{(l)}_{(\underline{x},\overline{x})}+\sum_{j=0}^{[l]}\langle u\rangle_{(\underline{x},\overline{x})}^{(j)},\label{1.9}\\
    &\langle u\rangle^{(0)}_{(\underline{x},\overline{x})}=
    |u|^{(0)}_{(\underline{x},\overline{x})}=\sup_{(\underline{x},\overline{x})}|u|, \notag\\
    &\langle u\rangle_{(\underline{x},\overline{x})}^{(j)}= \sum_{(j)}|D^j_xu|^{(0)}_{(\underline{x},\overline{x})}, \langle u\rangle_{(\underline{x},\overline{x})}^{(l)}= \sum_{([l])}|D^{[l]}_xu|^{(l-[l])}_{(\underline{x},\overline{x})},\notag
\end{align}
where \eqref{1.9} defines the norm $|u|^{(l)}_{(\underline{x},\overline{x})} $ in $ \mathcal {H}^{l}([\underline{x},\overline{x}])$.

$\mathcal {H}^{l,l/2}(\overline{Q}_T)$ denotes the Banach space of functions $u(x,t)$ that are continuous in $\overline{Q}_T$, together with all derivatives of the form $D^r_tD^s_x$ for $2r+s<l$, and have a finite norm
\begin{align}
    &|u|^{(l)}_{Q_T}= \langle u\rangle^{(l)}_{Q_T}+\sum_{j=0}^{[l]}\langle u\rangle^{(j)}_{Q_T},\label{1.10}
\end{align}
with
\begin{align}
    \langle u\rangle^{(0)}_{Q_T}=&|u|^{(0)}_{Q_T}=\max_{\overline{Q}_T}|u|, \notag\\
    \langle u\rangle^{(j)}_{Q_T}=& \sum\limits _{(2r+s=j)}|D^r_tD^s_xu|^{(0)}_{Q_T},\notag\\
    \langle u\rangle^{(l)}_{Q_T}=&\langle u\rangle^{(l)}_{x,Q_T}+\langle u\rangle^{(l/2)}_{t,Q_T}, \notag\\
    \langle u\rangle^{(l)}_{x,Q_T}=& \sum\limits _{(2r+s=[l])}\langle D^r_tD^s_xu\rangle^{(l-[l])}_{x,Q_T},   \notag\\
    \langle u\rangle^{(l/2)}_{t,Q_T}= & \sum\limits _{0<l-2r-s<2}\langle D^r_tD^s_xu\rangle^{(\frac{l-2r-s}{2})}_{t,Q_T}, \notag\\
    \langle u\rangle^{(\alpha)}_{t,Q_T}=&\sup_{(x,t),(x,t')\in \overline{Q}_T,|t-t'|\leq \rho_0}\frac{|u(x,t)-u(x,t')|}{|t-t'|^{\tau}}, \notag\\
     & \ 0<\tau <1,\notag\\
     \langle u\rangle^{(\tau)}_{x,Q_T}=&\sup_{t\in (0,T)}\sup \rho^{-\tau}\text{osc}_x\left\{u(x,t);\Omega_\rho^i\right\}.\notag
\end{align}
In the last line, the second supremum is taken over all connected components $ \Omega_\rho^i$ of all $\Omega_\rho$ with $\rho{}{\leq} \rho_0$, while {}{$\text{osc}_x\left\{u(x,t);\Omega_\rho^i\right\}$ is the oscillation of $u(x,t)$ on $\Omega_\rho^i$, i.e.,} $\text{osc}_x\left\{u(x,t);\Omega_\rho^i\right\} =\left\{\text{vrai}\sup\limits_{x\in \Omega_\rho^i}u(x,t)-\text{vrai}\inf\limits_{x\in \Omega_\rho^i}u(x,t)\right\}$, {}{$\rho_0=\underline{x}-\overline{x}$}, $\Omega_\rho=K_\rho\cap (\underline{x},\overline{x}) $, and $K_\rho$ is an arbitrary open interval in $(-\infty,+\infty)$ of length $2\rho$.

{{}
\subsection{Proof of Lemma~\ref{well-posedness of iterating equaitons}}\label{Appendix: Lemma 1}
}
The proof of Lemma~\ref{well-posedness of iterating equaitons} can be proceeded in four steps.

\textbf{Step 1:} We prove the existence of solutions of  \eqref{+35} and \eqref{+v} {}{by induction combining with the generalization of the approach developed in \cite{Ladyzenskaja:1968}}.
\begin{lemma}\label{Proposition: existence of approximation}
For any $n$, \eqref{+35} and \eqref{+v} have a unique solution $w_n\in \mathcal {H}^{2+\theta,1+\frac{\theta}{2}}(\overline{Q}_{T})$ and $v_n\in \mathcal {H}^{2+\theta,1+\frac{\theta}{2}}(\overline{Q}_{T})$ respectively.
\end{lemma}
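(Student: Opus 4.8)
The plan is to argue by induction on $n$, exploiting the fact that the iteration scheme has been designed precisely to \emph{linearize and localize} the problem at each stage: the coefficient $G_{w_{n-1}}(t)$ in \eqref{+35a}--\eqref{+35c} depends only on the previous iterate $w_{n-1}$ and only on $t$, so that once $w_{n-1}$ is known, \eqref{+35} is a \emph{linear}, uniformly parabolic initial--boundary value problem for $w_n$ with a space-independent time-dependent drift $G_{w_{n-1}}(t)$ and a Robin-type (oblique-derivative) boundary condition. Likewise, once $w_n$ is constructed, $G_{w_n}(t)$ is a known function of $t$ and \eqref{+v} is a linear parabolic problem for $v_n$. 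The base case is immediate: $w_0(x,t):=w^0(x)\in\mathcal{H}^{2+\theta}([\underline{x},\overline{x}])$ is constant in $t$, hence belongs to $\mathcal{H}^{2+\theta,1+\frac{\theta}{2}}(\overline{Q}_T)$, and by \eqref{+28'} its mass $\int_{\underline{x}}^{\overline{x}}w_0\,\text{d}x=\int_{\underline{x}}^{\overline{x}}w^0\,\text{d}x$ obeys $\big|\int_{\underline{x}}^{\overline{x}}w^0\,\text{d}x\big|\ge\frac{\delta_0}{2}>0$.

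For the inductive step, suppose $w_{n-1}\in\mathcal{H}^{2+\theta,1+\frac{\theta}{2}}(\overline{Q}_T)$ has been constructed and satisfies $\int_{\underline{x}}^{\overline{x}}w_{n-1}(x,t)\,\text{d}x=\int_{\underline{x}}^{\overline{x}}w^0\,\text{d}x+\int_0^t\!\int_{\underline{x}}^{\overline{x}}\delta_{n-1}\,\text{d}x\,\text{d}s$, so that by \eqref{+28'} the denominator of $G_{w_{n-1}}$ is bounded below in absolute value by $\frac{\delta_0}{2}$. First I would show that $G_{w_{n-1}}$ is Hölder continuous in $t$ with exponent $\frac{\theta}{2}$: since $w_{n-1}$ and $\partial_x w_{n-1}$ are continuous on $\overline{Q}_T$ and $\partial_t w_{n-1}$ is bounded, the boundary traces $w_{n-1}(\overline{x},\cdot),\,w_{n-1}(\underline{x},\cdot)$ and the integral quantities $\int\alpha_1 w_{n-1}\,\text{d}x$, $\int(ax+b)w_{n-1}\,\text{d}x$, $\int w_{n-1}\,\text{d}x$ are all Lipschitz in $t$, and dividing by a denominator that is Lipschitz and bounded away from zero keeps $G_{w_{n-1}}$ Lipschitz, hence Hölder-$\frac{\theta}{2}$. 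Rewriting \eqref{+35a} as $\partial_t w_n-\beta\,\partial_{xx}w_n+\big(\alpha_1(x)+G_{w_{n-1}}(t)\big)\partial_x w_n+\alpha_1'(x)w_n=\delta_n$, the leading coefficient $\beta>0$ is constant, the first- and zeroth-order coefficients lie in $\mathcal{H}^{\theta,\frac{\theta}{2}}(\overline{Q}_T)$ (recall $\alpha_1$ is affine and $G_{w_{n-1}}$ is constant in $x$), the source $\delta_n\in\mathcal{H}^{\theta,\frac{\theta}{2}}(\overline{Q}_T)$ by construction, and \eqref{+35b}--\eqref{+35c} are genuine oblique-derivative conditions (the coefficient of $\partial_x w_n$ equals $\beta\neq0$). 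The classical Schauder theory for linear parabolic problems with oblique-derivative boundary conditions \cite[Chap.~V]{Ladyzenskaja:1968} then yields a unique $w_n\in\mathcal{H}^{2+\theta,1+\frac{\theta}{2}}(\overline{Q}_T)$, provided the zeroth-order corner compatibility $\beta(w^0)'(\overline{x})-\big(\alpha_1(\overline{x})+G_{w_{n-1}}(0)\big)w^0(\overline{x})=0$ holds at $(\overline{x},0)$ and likewise at $(\underline{x},0)$; since every iterate starts from $w^0$, one has $G_{w_{n-1}}(0)=G_{w^0}$ independently of $n$, so this reduces to a single condition on the initial datum inherited from the boundary conditions \eqref{eq: BCs of FPE} of the original model.

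Having obtained $w_n$, I would integrate \eqref{+35a} over $(\underline{x},\overline{x})$ and use \eqref{+35b}--\eqref{+35c} to cancel the boundary flux, giving the mass identity $\int_{\underline{x}}^{\overline{x}}w_n(x,t)\,\text{d}x=\int_{\underline{x}}^{\overline{x}}w^0\,\text{d}x+\int_0^t\!\int_{\underline{x}}^{\overline{x}}\delta_n\,\text{d}x\,\text{d}s$; by \eqref{+28'} this stays bounded below by $\frac{\delta_0}{2}$, which simultaneously advances the induction hypothesis and guarantees that $G_{w_n}(t)$ is well-defined. Repeating verbatim the argument of the previous paragraph, now with $G_{w_n}(t)$ in place of $G_{w_{n-1}}(t)$ and $\alpha_0$ in place of $\alpha_1$, produces a unique $v_n\in\mathcal{H}^{2+\theta,1+\frac{\theta}{2}}(\overline{Q}_T)$ solving \eqref{+v}, which closes the induction.

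The step I expect to be the main obstacle is the well-definedness and time-regularity of the nonlocal coefficient $G_{w_{n-1}}(t)$: everything downstream is a routine invocation of linear parabolic Schauder theory, but that theory applies only because (a) the integral $\int_{\underline{x}}^{\overline{x}} w_{n-1}\,\text{d}x$ in the denominator never vanishes, which is exactly where the mass identity together with the structural assumption \eqref{+28'} is essential in avoiding the singularity flagged in obstacle~(ii) of the discussion, and (b) $G_{w_{n-1}}$ inherits enough Hölder-$\frac{\theta}{2}$ time regularity from the parabolic regularity of $w_{n-1}$ to keep the linear problem inside the class $\mathcal{H}^{2+\theta,1+\frac{\theta}{2}}(\overline{Q}_T)$. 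A secondary technical point is the corner compatibility condition, which must be assumed on (or built into) the initial data $w^0,v^0$ so that the Schauder estimate delivers the full claimed regularity rather than a weaker one.
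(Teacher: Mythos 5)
Your proposal is correct and follows essentially the same route as the paper: induction on $n$, with the mass identity and \eqref{+28'} keeping the denominator of $G_{w_{n-1}}$ bounded away from zero so that each iterate solves a well-posed parabolic initial--boundary value problem handled by \cite[Chap.~V]{Ladyzenskaja:1968}, and with \eqref{+v} treated as a linear equation once $w_n$ is known. The only notable differences are that the paper invokes the quasilinear existence theorem (Theorem 7.4) after verifying the structural growth conditions $-wb_n\le c_0p^2+c_1w^2+c_2$ and $-w\psi_n\le c_3w^2$ --- which simultaneously yields the quantitative bounds on $\max|w_n|$, $\max|w_{nx}|$ and $|w_n|^{(2+\theta)}_{Q_T}$ reused later for uniformity in $n$ --- whereas you cite the linear Schauder theory, and that your explicit flagging of the corner compatibility condition on $w^0$, $v^0$ makes precise a hypothesis the paper leaves implicit.
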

\begin{proof}
{First} we prove the existence and the uniqueness of the solution $w_n\in \mathcal {H}^{2+\theta,1+\frac{\theta}{2}}(\overline{Q}_{T})$ of \eqref{+35}. {}{We rewrite \eqref{+35} as below:
\begin{subequations}\label{+35'}
\begin{align}
  &\dfrac{\partial w_n}{\partial t}(x,t)-\beta w_{nxx} + b_n(x,t,w_n,w_{nx})=0,  \label{+35a'}\\
  &\beta w_{nx}(\overline{x},t)+\psi_n(\overline{x},t,w_n)=0,\label{+35b'}\\
  &\beta w_{nx}(\underline{x},t)-\psi_n(\underline{x},t,w_n)=0,\label{+35c'}\\
                                         &w_{n}(x,0)=w^0(x),\label{+35d'}
\end{align}
\end{subequations}
where
$b_n(x,t,w,p)=\alpha_{1x}w+(\alpha_1+G_{w_{n-1}}(t))p-\delta_n(x,t)$ and $ \psi_n(x,t,w)=(\underline{x}+\overline{x}-2x)(\alpha_1+G_{w_{n-1}}(t))w$.}

We proceed by induction {}{to show that \eqref{+35'} has a unique solution $w_n\in \mathcal {H}^{2+\theta,1+\frac{\theta}{2}}(\overline{Q}_{T})$ which satisfies:
\begin{align}\label{+34'''}
\max_{\overline{Q}_{T}}|w_n|\leq m_n,
\max_{\overline{Q}_{T}}|w_{nx}|\leq M_{n},|w_n|^{(2+\theta)}_{Q_{T}}\leq \overline{M}_n,
\end{align}
where $m_n$ depends only on $a,b,k_0,y_d,P,\eta,\beta$, $\|\alpha_1\|_\infty$, $\|\alpha_{1x}\|_\infty$, $\|\delta\|_\infty$, $\max_{\overline{Q}_{T}}|w_{n-1}|,\big|\int_{\underline{x}}^{\overline{x}}w^{0}\text{d}x\big|$ and $n$, $ M_{n}$ depends only on $\beta$, $\overline{x}$, $\underline{x}$, $m_n$, $\|\alpha_1\|_\infty$, $\|\alpha_{1x}\|_\infty$, $\max_{\overline{Q}_{T}}|w_{n-1}|$, $|w^0|^{(2)}_{(\underline{x},\overline{x})}$, and $\overline{M}_n$ depends only on $M_n$, $m_n$, $\theta$, $\beta$, $\widehat{\delta}$, $|w^0|^{(2)}_{(\underline{x},\overline{x})}$.}

{}{Indeed, for $n=1$}, recalling that $ \alpha_1,w^0\in C^{1}([\underline{x},\overline{x}])$ and $ \big|\int_{\underline{x}}^{\overline{x}}w^{0}\text{d}x\big|>0$, there exists a constant $g_0>0$ {}{depending only on $a,b,k_0,y_d,P,\eta,\beta$, $\|\alpha_1\|_\infty$, $\max_{[\underline{x},\overline{x}]}|w^0|$ and $\big|\int_{\underline{x}}^{\overline{x}}w^{0}\text{d}x\big|$} such that $|G_{w^0}(t)|\leq g_0$ for any $t\geq 0$. For any $w$ and $p$, we have
\begin{align}
-w{}{b_1}(x,t,w,p)\leq &\bigg( \|\alpha_{1x}\|_{\infty}+\frac{\|\alpha_1\|_{\infty}+g_0+1}{2} \bigg)w^2+\frac{\|\alpha_1\|_{\infty}+g_0}{2}p^2+\frac{\|\delta_1\|^2_{\infty}}{2}\notag\\
\leq &\bigg( \|\alpha_{1x}\|_{\infty}+\frac{\|\alpha_1\|_{\infty}+g_0+1}{2} \bigg)w^2
+\frac{\|\alpha_1\|_{\infty}+g_0}{2}p^2+\frac{1+\|\delta\|^2_{\infty}}{2}\notag\\
:=&c_{10}p^2+c_{11}w^2+c_{12},\notag\\
-w{}{\psi_1}(x,t,w,p)\leq &(\|\alpha_1\|_{\infty}+g_0 ):=c_{13}w^2.\notag
\end{align}
{}{Then all the structural conditions of \cite[Theorem 7.4, Chap. V]{Ladyzenskaja:1968} are satisfied. Therefore,} \eqref{+35'} has a unique solution $w_1\in \mathcal {H}^{2+\theta,1+\frac{\theta}{2}}(\overline{Q}_{T})$. Moreover, by \cite[Theorem 7.3, 7.2, Chap. V]{Ladyzenskaja:1968}, we have the following estimates:
\begin{subequations}
\begin{align}\label{+34}
\max_{\overline{Q}_{T}}|w_1|\leq& \lambda_{11}\text{e}^{\lambda_1T}\max\{\sqrt{c_{12}}, w^0(\overline{x}), w^0(\underline{x})\}:=m_1,\\
\max_{\overline{Q}_{T}}|w_{1x}|\leq &M_{1},
\end{align}
\end{subequations}
where $ \lambda_{11},\lambda_1$ depend only on $\beta, c_{10}, c_{11}$, and $c_{13}$, $ M_{1}$ depends only on $ \beta,\overline{x},\underline{x},m_1,g_0,\|\alpha_1\|_\infty,\|\alpha_{1x}\|_\infty$, and $|w^0|^{(2)}_{(\underline{x},\overline{x})}$.
Furthermore, by \cite[Theorem 5.4, Chap. V]{Ladyzenskaja:1968}, we have
\begin{align}
|w_1|^{(2+\theta)}_{Q_{T}}\leq \overline{M}_1,
\end{align}
where $ \overline{M}_1$ depends only on $ M_1,m_1,\theta,\beta,\widehat{\delta}$, and $|w^0|^{(2)}_{(\underline{x},\overline{x})}$.

{}{Assuming that for $n=k$ \eqref{+35'} has a unique solution $w_n\in \mathcal {H}^{2+\theta,1+\frac{\theta}{2}}(\overline{Q}_{T})$ satisfying \eqref{+34'''}, we prove the claim for $n=k+1$, where $k>1$ is an integer.} Noting that by \eqref{+35}, for all $t\in [0,T]$, it follows
\begin{align}\label{total}
\int_{\underline{x}}^{\overline{x}}{}{w_k}(x,t)\text{d}x=\int_{\underline{x}}^{\overline{x}}w^0(x)
\text{d}x+\int_{0}^t\int_{\underline{x}}^{\overline{x}}{}{\delta_k}(x,t)\text{d}x\text{d}t.
\end{align}
By the definition of ${}{G_{w_{k}}}(t)$, \eqref{+28} and  \eqref{+34'''}, it follows
\begin{align}\label{g_n}
|{}{G_{w_k}}(t)|\leq {}{g_k},\forall t\in [0,T],
\end{align}
where {}{$g_k$} is a positive constant depending only on $T,\delta_0$, $a,b,\overline{x},\underline{x}, \frac{k_0\eta}{P},y_d,\beta,\|\alpha\|_\infty$, {}{$\max_{\overline{Q}_{T}}|w_{k}|,\big|\int_{\underline{x}}^{\overline{x}}w^{0}\text{d}x\big|$}.
{}{
Then for any $w$ and $p$, we have
\begin{align}
-w{b_{k+1}}(x,t,w,p)\leq &c_{(k+1)0}p^2+c_{(k+1)1}w^2+c_{(k+1)2},\notag\\
-w{\psi_{k+1}}(x,t,w,p)\leq &(\|\alpha_1\|_{\infty}+g_k ):=c_{(k+1)3}w^2,\notag
\end{align}
where $c_{(k+1)0},c_{(k+1)1},c_{(k+1)2}$, and $c_{(k+1)3}$ are positive constants depending only on $g_k,\|\alpha_1\|_\infty$, $\|\alpha_{1x}\|_\infty$, and $\|\delta\|_\infty$.}

{}{By \cite[Theorem 7.4, 7.3, 7.2, Theorem 5.4, Chap. V]{Ladyzenskaja:1968},} we conclude that \eqref{+35'} has a unique solution $w_n\in \mathcal {H}^{2+\theta,1+\frac{\theta}{2}}(\overline{Q}_{T})$ for {}{$n=k+1$}, having the estimates {}{in \eqref{+34'''}. Therefore, \eqref{+35} has a unique solution $w_n\in \mathcal {H}^{2+\theta,1+\frac{\theta}{2}}(\overline{Q}_{T})$ for every {$n\geq 1$}}.

We have then that for any $n$, there exists $w_n$ satisfying \eqref{+35}. Recalling the definition of $G_{w_n}(t)$, $G_{w_n}(t)$ is fixed.  Therefore \eqref{+v} is a linear equation. Finally, the existence of a unique solution $v_n\in \mathcal {H}^{2+\theta,1+\frac{\theta}{2}}(\overline{Q}_{T})$ to \eqref{+v} is guaranteed by \cite[Theorem 7.4, Chap. V]{Ladyzenskaja:1968}. Moreover, $v_n$ has similar estimates as \eqref{+34'''}, i.e.,
\begin{align*}
\max_{\overline{Q}_{T}}|v_n|+\max_{\overline{Q}_{T}}|v_{nx}|+|v_n|^{(2+\theta)}_{Q_{T}}\leq C_n,
\end{align*}
with a certain positive constant $C_n$.
\end{proof}

\textbf{Step 2:} We prove a property of $(w_n,v_n)$ stated below.
\begin{lemma} Let $w_n,v_n\in \mathcal {H}^{2+\theta,1+\frac{\theta}{2}}(\overline{Q}_{T})$ be the solutions to \eqref{+35} and \eqref{+v}  respectively. For any $n$ and any $t\in [0,T]$, the following equalities hold:
\begin{align*}
&\int_{\underline{x}}^{\overline{x}}w_n(x,t)\text{d}x=\int_{\underline{x}}^{\overline{x}}w^0(x)
\text{d}x+\int_{0}^t\int_{\underline{x}}^{\overline{x}}\delta_n(x,s)\text{d}x\text{d}s\geq \frac{ \delta_0}{2},\\
&\int_{\underline{x}}^{\overline{x}}v_n(x,t)\text{d}x=\int_{\underline{x}}^{\overline{x}}v^0(x)
\text{d}x-\int_{0}^t\int_{\underline{x}}^{\overline{x}}\delta_n(x,s)\text{d}x\text{d}s\geq \frac{ \delta_0}{2}.
\end{align*}
\end{lemma}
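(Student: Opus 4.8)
The plan is to prove both identities as mass balances: I put each iterating equation in conservation form, integrate over $(\underline{x},\overline{x})$, and observe that the Robin boundary conditions are exactly the statement that the total flux vanishes at the two endpoints. I carry out the $w_n$ case in detail; the $v_n$ case is identical up to notation.

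Since $G_{w_{n-1}}(t)$ does not depend on $x$, the spatial part of the right-hand side of \eqref{+35a} is an exact $x$-derivative: writing $J_n := \beta w_{nx} - \alpha_1 w_n - G_{w_{n-1}}(t)\,w_n$, one has $-\partial_x(\alpha_1 w_n) + \beta w_{nxx} - G_{w_{n-1}}(t)\,w_{nx} = \partial_x J_n$, so \eqref{+35a} becomes $\partial_t w_n = \partial_x J_n + \delta_n$. Because $w_n \in \mathcal{H}^{2+\theta,1+\frac{\theta}{2}}(\overline{Q}_T)$ by Lemma~\ref{Proposition: existence of approximation}, I may integrate in $x$ and differentiate under the integral sign to obtain $\frac{d}{dt}\int_{\underline{x}}^{\overline{x}} w_n\,dx = J_n(\overline{x},t) - J_n(\underline{x},t) + \int_{\underline{x}}^{\overline{x}} \delta_n\,dx$. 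The boundary conditions \eqref{+35b} and \eqref{+35c} say precisely that $J_n(\overline{x},t)=J_n(\underline{x},t)=0$; hence the boundary terms drop out, leaving $\frac{d}{dt}\int_{\underline{x}}^{\overline{x}} w_n\,dx = \int_{\underline{x}}^{\overline{x}} \delta_n\,dx$. Integrating in $t$ and using the initial datum \eqref{+35d} gives the claimed equality for $w_n$.

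For the lower bound I combine this equality with the standing property \eqref{+28'} of the approximating sequence, which reads $\bigl|\int_{\underline{x}}^{\overline{x}} w^0\,dx + \int_0^t\!\!\int_{\underline{x}}^{\overline{x}} \delta_n\,dx\,ds\bigr| \geq \delta_0/2$, i.e. the modulus of $\mu_n(t):=\int_{\underline{x}}^{\overline{x}} w_n(\cdot,t)\,dx$ is bounded below by $\delta_0/2>0$ on $[0,T]$. To convert this modulus bound into the signed bound $\mu_n(t)\geq \delta_0/2$, I invoke continuity: $\mu_n$ is continuous in $t$, satisfies $\mu_n(0)=\int_{\underline{x}}^{\overline{x}} w^0\,dx>0$ by \eqref{eq: ON on 0}, and can never equal $0$ since $|\mu_n|\geq\delta_0/2$; by the intermediate value theorem it therefore stays strictly positive, so $\mu_n(t)\geq \delta_0/2$ for all $t$.

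The $v_n$ identity follows by the same computation applied to \eqref{+v}: the OFF-state flux is annihilated at both endpoints by its boundary conditions, so integration in $x$ and then in $t$ yields $\int_{\underline{x}}^{\overline{x}} v_n(\cdot,t)\,dx = \int_{\underline{x}}^{\overline{x}} v^0\,dx - \int_0^t\!\!\int_{\underline{x}}^{\overline{x}} \delta_n\,dx\,ds$, and the positivity $\int_{\underline{x}}^{\overline{x}} v_n(\cdot,t)\,dx \geq \delta_0/2$ is obtained from the corresponding continuity argument, using that at every instant not all loads lie in the ON state. The computation itself is routine; the one step I would treat with care is the passage from the modulus estimate \eqref{+28'} to the signed lower bound, which genuinely needs continuity in $t$ together with the non-vanishing of the mass.
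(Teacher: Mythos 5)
Your proof is correct and follows the same route as the paper, which simply integrates the iterating equations over $(\underline{x},\overline{x})$, uses the Robin boundary conditions to annihilate the flux at the endpoints, and invokes \eqref{+28'}. Your additional care in passing from the modulus bound $|\mu_n(t)|\geq \delta_0/2$ to the signed bound via continuity of $\mu_n$ and the intermediate value theorem is a detail the paper leaves implicit, and it is a worthwhile clarification.
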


\begin{proof}
It suffices to integrating over $ (\underline{x},\overline{x})$ and using the boundary conditions and \eqref{+28'}.
\end{proof}

\textbf{Step 3:} We prove the following property of $(w_n,v_n)$ which implies that $(w_n,v_n)$ is uniformly bounded in $L^1$-norm.
\begin{lemma}\label{Proposition: G_n} Let $w_n,v_n\in \mathcal {H}^{2+\theta,1+\frac{\theta}{2}}(\overline{Q}_{T})$ be the solutions to \eqref{+35} and \eqref{+v}  respectively. For any $n$ and any $t\in [0,T]$, the following estimates hold:
\begin{enumerate}
\item [(i)]
$\|w_n(\cdot,t)\|_1\leq  \|w^0\|_1
+\displaystyle\int_0^t\int_{\underline{x}}^{\overline{x}}\delta_n(x,s) \text{sgn} (w_n) \text{d}x\text{d}s,$
\item [(ii)]$\|v_n(\cdot,t)\|_1\leq  \|v^0\|_1
-\displaystyle\int_0^t\int_{\underline{x}}^{\overline{x}}\delta_n(x,s) \text{sgn} (v_n) \text{d}x\text{d}s.$
\end{enumerate}
\end{lemma}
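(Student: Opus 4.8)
The plan is to establish the $L^1$-estimates by differentiating $\|w_n(\cdot,t)\|_1 = \int_{\underline{x}}^{\overline{x}} |w_n(x,t)|\,\text{d}x$ in time and controlling the resulting expression. Since $w_n \in \mathcal{H}^{2+\theta,1+\frac{\theta}{2}}(\overline{Q}_T)$ is smooth enough to admit classical derivatives, the essential difficulty is that $|w_n|$ is not differentiable where $w_n = 0$. The standard device is to approximate the absolute value by a smooth, convex regularization $\zeta_\varepsilon(s)$ with $\zeta_\varepsilon(s) \to |s|$ and $\zeta_\varepsilon'(s) \to \text{sgn}(s)$ as $\varepsilon \to 0^+$ (for instance $\zeta_\varepsilon(s) = \sqrt{s^2 + \varepsilon^2} - \varepsilon$), use $\zeta_\varepsilon'(w_n)$ as a test function, and pass to the limit at the end.

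First I would fix $n$ and compute, for the regularized functional $I_\varepsilon(t) = \int_{\underline{x}}^{\overline{x}} \zeta_\varepsilon(w_n(x,t))\,\text{d}x$,
\begin{align*}
\frac{\diff{I_\varepsilon}}{\diff{t}} = \int_{\underline{x}}^{\overline{x}} \zeta_\varepsilon'(w_n)\,\frac{\partial w_n}{\partial t}\,\text{d}x,
\end{align*}
and substitute the right-hand side of the PDE \eqref{+35a}. The terms involving $\beta w_{nxx}$, $-\partial_x(\alpha_1 w_n)$, and $-w_{nx}G_{w_{n-1}}(t)$ must then be handled via integration by parts. For the diffusion term I would write $\int \zeta_\varepsilon'(w_n)\beta w_{nxx}\,\text{d}x = \beta[\zeta_\varepsilon'(w_n)w_{nx}]_{\underline{x}}^{\overline{x}} - \beta\int \zeta_\varepsilon''(w_n)(w_{nx})^2\,\text{d}x$; the last integral is nonpositive by convexity of $\zeta_\varepsilon$, so it can be discarded when bounding from above. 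The transport and nonlocal terms, after integration by parts, produce boundary contributions together with interior integrals of the form $\int \zeta_\varepsilon''(w_n)w_{nx}(\cdots)w_n\,\text{d}x$; the point of the calculation is that the boundary terms combine with the boundary conditions \eqref{+35b}--\eqref{+35c}, while the interior terms should cancel or vanish in the limit $\varepsilon \to 0$ because $s\,\zeta_\varepsilon''(s) \to 0$ pointwise.

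The key structural observation is that the boundary conditions \eqref{+35b}--\eqref{+35c} state precisely that the full flux $\beta w_{nx} - (\alpha_1 + G_{w_{n-1}})w_n$ vanishes at both $\underline{x}$ and $\overline{x}$. Consequently, after grouping the diffusion, transport, and nonlocal flux terms into a single divergence $\partial_x\big(\beta w_{nx} - (\alpha_1+G_{w_{n-1}})w_n\big)$, testing against $\zeta_\varepsilon'(w_n)$ and integrating by parts makes the boundary terms drop out identically, leaving only $-\int \zeta_\varepsilon''(w_n)(\text{flux})(w_{nx})\,\text{d}x$ (nonpositive in the diffusive part) plus the source term $\int \zeta_\varepsilon'(w_n)\delta_n\,\text{d}x$. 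Letting $\varepsilon \to 0^+$, using $\zeta_\varepsilon'(w_n)\to\text{sgn}(w_n)$ and dominated convergence, I obtain the differential inequality $\frac{\diff{}}{\diff{t}}\|w_n(\cdot,t)\|_1 \leq \int_{\underline{x}}^{\overline{x}}\delta_n(x,t)\,\text{sgn}(w_n)\,\text{d}x$, and integrating from $0$ to $t$ with $\|w_n(\cdot,0)\|_1 = \|w^0\|_1$ yields claim~(i). Claim~(ii) for $v_n$ follows by the identical argument applied to \eqref{+v}, noting the opposite sign of the source term $\delta_n$.

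I expect the main obstacle to be the rigorous treatment of the interior terms arising from the convective and nonlocal parts under the regularization: one must verify carefully that the products $\zeta_\varepsilon''(w_n)w_n w_{nx}(\alpha_1 + G_{w_{n-1}})$ integrate to something that vanishes (or is controlled) as $\varepsilon \to 0$, which relies on the uniform bound on $G_{w_{n-1}}$ from \eqref{g_n} and the regularity of $w_n$ ensuring $w_{nx}$ is bounded. An alternative, and arguably cleaner, route is to avoid regularization by invoking the fact that for a function in $\mathcal{H}^{2+\theta,1+\frac{\theta}{2}}(\overline{Q}_T)$ the set $\{w_n = 0\}$ contributes nothing to the a.e. derivative of $\|w_n\|_1$, so that Stampacchia's lemma gives $\frac{\diff{}}{\diff{t}}\int|w_n| = \int \text{sgn}(w_n)\,\partial_t w_n$ directly for a.e.\ $t$; this sidesteps the regularization but still requires the flux-form integration by parts above to exploit the homogeneous boundary conditions.
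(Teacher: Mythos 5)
Your proposal is correct and follows essentially the same route as the paper: the paper likewise replaces $|\cdot|$ by a $C^2$ convex regularization (a quartic patch $\rho_\varepsilon$ with $\rho_\varepsilon''$ supported on $\{|r|\le\varepsilon\}$ rather than $\sqrt{s^2+\varepsilon^2}-\varepsilon$), tests the equation with $\rho_\varepsilon'(w_n)$, uses the flux-form boundary conditions to kill the boundary terms and convexity to discard the diffusive contribution, and shows the interior convective/nonlocal term vanishes as $\varepsilon\to 0$. The only cosmetic difference is that the paper controls that cross term via Young's inequality together with the explicit bound $\|w_n\sqrt{\rho_\varepsilon''(w_n)}\|^2\le\frac{3}{2}\varepsilon(\overline{x}-\underline{x})$ and closes the argument with Gronwall's inequality in integrated form before sending $\varepsilon\to 0$, rather than passing to the limit in the differential inequality as you suggest.
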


\begin{proof}
We only prove the first inequality in the lemma. The second inequality can be assessed similarly. For $\varepsilon>0$, define
\begin{align*}
\rho_\varepsilon(r)=\left\{\begin{aligned}
& |r|,\  |r|>\varepsilon,\\
& -\frac{r^4}{8\varepsilon^3}+\frac{r^2}{4\varepsilon}+\frac{3\varepsilon}{8},\  |r|\leq \varepsilon,
\end{aligned}\right.
\end{align*}
which is $C^2$-continuous in $r$ satisfying $\rho_\varepsilon(r)\geq |r|$, $|\rho_\varepsilon'(r)|\leq 1$, and $\rho_\varepsilon''(r)\geq 0$. Let $\widetilde{G}_{n}(x,t)=\alpha_1(x)+G_{w_n}(t)$.
By \eqref{g_n}, \eqref{total}, \eqref{+28'}, we have that
\begin{align}\label{g_n'}
|\widetilde{G}_{n-1}(x,t)|\leq &\|\alpha_1\|_{\infty}+g_{n-1}\notag\\
\leq &\frac{{}{2(\|\alpha_1\|_{\infty}+g_{n-1})}}{\delta_0}\int_{\underline{x}}^{\overline{x}}|w_{n}|\text{d}x,
\end{align}
for any $(x,t)\in Q_T$.
Multiplying $\rho_\varepsilon'(w_n) $ to \eqref{+35}, and integrating
by
parts, we have
\begin{align}\label{-15}
\frac{\text{d}}{\text{d}t}\int_{\underline{x}}^{\overline{x}}\rho_\varepsilon(w_n)\text{d}x=&-\beta \|w_{nx}\sqrt{\rho_\varepsilon''(w_n)}\|^2+\int_{\underline{x}}^{\overline{x}}w_n\widetilde{G}_{n-1}(x,t)\rho_\varepsilon''(w_n)w_{nx}\text{d}x+\int_{\underline{x}}^{\overline{x}}\delta_n(x,t) \rho_\varepsilon'(w_n)\text{d}x.
\end{align}
We estimate the second term on the right-hand-side of \eqref{-15}. By Young's inequality, it follows
\begin{align}\label{-16}
\int_{\underline{x}}^{\overline{x}}w_n\widetilde{G}_{n-1}(x,t)\rho_\varepsilon''(w_n)w_{nx}\text{d}x \leq & \frac{1}{4\beta} \|w_n\widetilde{G}_{n-1}\sqrt{\rho_\varepsilon''(w_n)}\|^2
 +\beta\|w_{nx}\sqrt{\rho_\varepsilon''(w_n)}\|^2\notag\\
\leq &\frac{1}{4\beta} \|\widetilde{G}_{n-1}\|_{\infty}^2 \|w_n\sqrt{\rho_\varepsilon''(w_n)}\|^2
 +\beta\|w_{nx}\sqrt{\rho_\varepsilon''(w_n)}\|^2.
\end{align}
By \eqref{-15} and \eqref{-16}, we have
\begin{align}\label{-17}
\frac{\text{d}}{\text{d}t}\int_{\underline{x}}^{\overline{x}}\rho_\varepsilon(w_n)\text{d}x\leq&\frac{1}{4\beta} \|\widetilde{G}_{n-1}\|_{\infty}^2 \|w_n\sqrt{\rho_\varepsilon''(w_n)}\|^2+\int_{\underline{x}}^{\overline{x}}\delta_n(x,t) \rho_\varepsilon'(w_n)\text{d}x.
\end{align}
Note that
\begin{align}\label{-20}
\|w_n\sqrt{\rho_\varepsilon''(w_n)}\|^2=&\int_{\underline{x}}^{\overline{x}}w_n^2\rho_\varepsilon''(w_n)\chi_{\{|w_n|>\varepsilon\}}(x)\text{d}x
+\int_{\underline{x}}^{\overline{x}}w_n^2\rho_\varepsilon''(w_n)\chi_{\{|w_n|\leq\varepsilon\}}(x)\text{d}x\notag\\
=&\int_{\underline{x}}^{\overline{x}}w_n^2\rho_\varepsilon''(w_n)\chi_{\{|w_n|\leq\varepsilon\}}(x)\text{d}x\notag\\
=&\int_{\underline{x}}^{\overline{x}}w_n^2\frac{3(\varepsilon^2-w_n^2)}{2\varepsilon^3}\chi_{\{|w_n|\leq\varepsilon\}}(x)\text{d}x\notag\\
\leq&\frac{3}{2}\varepsilon( \overline{x}-\underline{x}).
\end{align}
By \eqref{g_n'}, \eqref{-17}, and \eqref{-20}, it follows
\begin{align}\label{-21}
\frac{\text{d}}{\text{d}t}\int_{\underline{x}}^{\overline{x}}\rho_\varepsilon(w_n)\text{d}x \leq&\frac{3\varepsilon( \overline{x}-\underline{x})}{{}{4}\beta}
\frac{\|\alpha_1\|_{\infty}+g_{n-1}}{\delta_0}
\bigg(\int_{\underline{x}}^{\overline{x}}|w_{n}|\text{d}x\bigg)^2
 +\int_{\underline{x}}^{\overline{x}}\delta_n(x,t)  \rho_\varepsilon'(w_n)\text{d}x\notag\\
\leq& \frac{3\varepsilon( \overline{x}-\underline{x})}{{}{4}\beta}
\frac{\|\alpha_1\|_{\infty}+g_{n-1}}{\delta_0}\bigg(\int_{\underline{x}}^{\overline{x}}\rho_\varepsilon(w_{n})\text{d}x\bigg)^2
 +\int_{\underline{x}}^{\overline{x}}\delta_n(x,t)\rho_\varepsilon'(w_n) \text{d}x\notag\\
:=&c_n\varepsilon\bigg(\int_{\underline{x}}^{\overline{x}}\rho_\varepsilon(w_{n})\text{d}x\bigg)^2
  +\int_{\underline{x}}^{\overline{x}}\delta_n(x,t) \rho_\varepsilon'(w_n) \text{d}x.
\end{align}
We infer from Gronwall's inequality:
\begin{align}
\int_{\underline{x}}^{\overline{x}}\rho_\varepsilon(w_n)\text{d}x
\leq&  \int_{\underline{x}}^{\overline{x}}\rho_\varepsilon(w^0)\text{d}x
\times \text{e}^{c_n\varepsilon\int_0^t\int_{\underline{x}}^{\overline{x}}
\rho_\varepsilon(w_n(x,s))\text{d}x\text{d}s}
\notag\\
&+\int_0^t\int_{\underline{x}}^{\overline{x}}{\delta_n(x,s)} \rho_\varepsilon'(w_n{(x,s)}) \text{d}x \times \text{e}^{c_n\varepsilon\int_s^t\int_{\underline{x}}^{\overline{x}}
\rho_\varepsilon(w_n(x,\tau))\text{d}x\text{d}\tau}\text{d}s.
\end{align}
Note that $ w_n$ is smooth and bounded on $[\underline{x},\overline{x}]\times[0,T]$. Letting $\varepsilon\rightarrow 0$, we get
\begin{align*}
      \int_{\underline{x}}^{\overline{x}}|w_n(x,t)|\text{d}x 
\leq & \int_{\underline{x}}^{\overline{x}}|w^0(x)|\text{d}x
       +\int_0^t\int_{\underline{x}}^{\overline{x}}{\delta_n(x,s)} \text{sgn} (w_n) \text{d}x\text{d}s.
\end{align*}
%
\end{proof}

\textbf{Step 4:} We prove that every constant of the estimates of $w_n$ and $v_n$ in the proof of Lemma~\ref{Proposition: G_n} are independent of $n$, i.e., we have the result stated below.
\begin{lemma}\label{Lemma G_n} Let $w_n,v_n\in \mathcal {H}^{2+\theta,1+\frac{\theta}{2}}(\overline{Q}_{T})$ be the solutions to \eqref{+35} and \eqref{+v}  respectively. For any $n$, it holds:
\begin{subequations}
\begin{align}
&\max_{\overline{Q}_{T}}|w_n|\leq {m},\max_{\overline{Q}_{T}}|w_{nx}|\leq M,|w_n|^{(2+\theta)}_{Q_{T}}\leq \overline{M},\label{44a}\\
&\max_{\overline{Q}_{T}}|v_n|\leq {l},\max_{\overline{Q}_{T}}|v_{nx}|\leq {L},|v_n|^{(2+\theta)}_{Q_{T}}\leq \overline{L},\label{44b}
\end{align}
\end{subequations}
where {$m,M,\overline{M},\widetilde{M},l,L$, and $\overline{L}$} are positive constants independent of $n$.
\end{lemma}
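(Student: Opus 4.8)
The plan is to upgrade the iteration-dependent bounds \eqref{+34'''} of Lemma~\ref{Proposition: existence of approximation} to $n$-independent ones, using as the driving input the uniform $L^1$-control already obtained in Step~3. Since $\|\delta_n\|_\infty\le 1+\|\delta\|_\infty$ for every $n$, Lemma~\ref{Proposition: G_n} gives
\[
\|w_n(\cdot,t)\|_1\le \|w^0\|_1+T(\overline{x}-\underline{x})(1+\|\delta\|_\infty)=:K_1,\qquad \|v_n(\cdot,t)\|_1\le K_2,
\]
with $K_1,K_2$ independent of $n$, while the identity of Step~2 gives the uniform lower bound $\int_{\underline{x}}^{\overline{x}}w_n\,\text{d}x\ge \delta_0/2$. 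The whole argument then reduces to a single uniform estimate: if one can show $g:=\sup_n\sup_{[0,T]}|G_{w_n}(t)|<+\infty$, then every structural constant $c_{n0},c_{n1},c_{n2},c_{n3}$ appearing in the proof of Lemma~\ref{Proposition: existence of approximation} is dominated by its value at $g$, so Theorems~7.2, 7.3 and~5.4 of \cite[Chap.~V]{Ladyzenskaja:1968} yield $m_n\le m$, $M_n\le M$, $\overline{M}_n\le \overline{M}$ with $m,M,\overline{M}$ independent of $n$, which is \eqref{44a}.

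Writing out $G_{w_n}$ and using $\int w_n\,\text{d}x\ge \delta_0/2$ together with the $L^1$ bound, one has
\[
|G_{w_n}(t)|\le \frac{2}{\delta_0}\Big[\beta\,|w_n(\overline{x},t)-w_n(\underline{x},t)|+\|\alpha_1\|_\infty K_1+\tfrac{k_0}{|a|}\big(\max_{[\underline{x},\overline{x}]}|ax+b|\,K_1+\tfrac{\eta}{P}|y_d|\big)\Big].
\]
Every term but the first is already uniform in $n$. The genuine difficulty is therefore the uniform control of the boundary trace $w_n(\overline{x},t)-w_n(\underline{x},t)$: this is exactly the mechanism that closes the feedback loop $\max|w_n|\to g_n\to(\text{coefficients of the }w_{n+1}\text{-problem})\to\max|w_{n+1}|$, and since the sup-bound furnished by \cite{Ladyzenskaja:1968} grows with the coefficient size $g_{n-1}$, a naive induction lets the constants blow up. Breaking this loop is the heart of Step~4 and the main obstacle of the entire lemma.

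To close it, I would convert the trace into a controllable quantity through the one-dimensional interpolation inequality $\|w_n(\cdot,t)\|_\infty\le \varepsilon\|w_{nx}(\cdot,t)\|_\infty+C_\varepsilon\|w_n(\cdot,t)\|_1$ (valid on a bounded interval, via Gagliardo--Nirenberg and Young), so that $|w_n(\overline{x},t)-w_n(\underline{x},t)|\le 2\varepsilon\max_{\overline{Q}_T}|w_{nx}|+2C_\varepsilon K_1$. Inserting this into the gradient estimate of \cite[Thm~7.3, Chap.~V]{Ladyzenskaja:1968} and exploiting the factor $\beta/\delta_0$ in the bound for $g_n$, one obtains a recursive inequality coupling $(m_n,M_n)$ to $(m_{n-1},M_{n-1})$ whose linear part can be rendered contractive — if necessary after partitioning $[0,T]$ into short subintervals on which the dependence of the sup-norm on the drift bound is $O(\tau)$ and patching the uniform bounds across subintervals. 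This delivers $m_n\le m$, $M_n\le M$ uniformly, and then $\overline{M}_n\le \overline{M}$ from the Schauder estimate of \cite[Thm~5.4, Chap.~V]{Ladyzenskaja:1968}. I expect this contraction/self-improvement step to be the delicate point, since it must be arranged so that the small factor $\varepsilon$ (or the short time $\tau$) genuinely beats the $g_{n-1}$-growth of the parabolic sup and gradient estimates.

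Finally, with $g<+\infty$ in hand the coefficient $G_{w_n}(t)$ of the $v_n$-problem \eqref{+v} is uniformly bounded, so \eqref{+v} is a \emph{linear} parabolic equation with uniformly controlled coefficients and source $\delta_n$; combining the maximum-principle and Schauder estimates of \cite[Chap.~V]{Ladyzenskaja:1968} with the uniform $L^1$ bound $\|v_n\|_1\le K_2$ then yields \eqref{44b} with $l,L,\overline{L}$ independent of $n$, which completes the proof.
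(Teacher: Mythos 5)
Your skeleton matches the paper's Step~4: use the uniform $L^1$ bounds of Lemma~\ref{Proposition: G_n} (together with $\|\delta_n\|_\infty\le 1+\|\delta\|_\infty$ and the lower bound $\int_{\underline{x}}^{\overline{x}}w_n\,\text{d}x\ge\delta_0/2$ from Step~2) to bound $G_{w_n}(t)$ uniformly in $n$, then feed that bound back into the structural constants so that $m_n,M_n,\overline M_n$ in \eqref{+34'''} become independent of $n$ via \cite[Theorems~5.4, 7.2--7.4, Chap.~V]{Ladyzenskaja:1968}; the $v_n$-estimates follow because \eqref{+v} is linear with uniformly bounded coefficients. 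You also correctly isolate the crux: the boundary trace $\beta(w_n(\overline{x},t)-w_n(\underline{x},t))$ in the numerator of $G_{w_n}$ is the one term not controlled by the $L^1$ bound, and it is what creates the feedback loop $g_{n-1}\to m_n,M_n\to g_n$.

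Your resolution of that crux, however, is a plan rather than a proof, and it is not the paper's route. The interpolation $\|w_n(\cdot,t)\|_\infty\le\varepsilon\|w_{nx}(\cdot,t)\|_\infty+C_\varepsilon\|w_n(\cdot,t)\|_1$ channels the trace through the gradient bound $M_n$, but $M_n$ is delivered by \cite[Theorem~7.3, Chap.~V]{Ladyzenskaja:1968} as a non-explicit, generically super-linear function of $g_{n-1}$ and $m_n$ (note the factor $\lambda_{11}\text{e}^{\lambda_1 T}$ already in the sup bound), so there is no reason the recursion $g_n\le \tfrac{4\beta\varepsilon}{\delta_0}M_n+C'_\varepsilon$ has a ``linear part'' that a small $\varepsilon$ can render contractive; moreover $C_\varepsilon\to\infty$ as $\varepsilon\to 0$, so the two requirements pull against each other. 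Partitioning $[0,T]$ tames the $T$-dependence of the constants, not their dependence on the iteration index $n$, which is the dependence at issue. You flag this step yourself as only ``expected'' to work. The paper closes the loop differently and without touching $M_n$: from the uniform $L^1$ bound (and the regularity of $w_n$) it first deduces a uniform sup bound $\max_{\overline{Q}_T}|w_n|\le m$, writes $\beta(w_n(\overline{x},t)-w_n(\underline{x},t))=\int_{\underline{x}}^{\overline{x}}\beta w_{nx}\,\text{d}x$ and bounds the trace by $2\beta m$, so that $g_n$ is uniformly bounded before the LSU theorems are invoked a second time. To complete your argument you would need either a genuine $L^1\to L^\infty$ estimate whose constants do not depend on $g_{n-1}$, or a quantitative verification that the LSU constants grow slowly enough in $g_{n-1}$ for your contraction to close; neither is supplied.
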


\begin{proof}
By the proof of Lemma \ref{Proposition: G_n} and $\|\delta_n\|_\infty\leq 1+\|\delta\|_\infty$, $\|w_n(\cdot,t)\|_1$ is uniformly bounded in $n$. Then by the continuity of $w_n$, we deduce that $w_n$ is uniformly bounded in $n$ on $\overline{Q}_{T}$. Note that $\int_{\underline{x}}^{\overline{x}}\beta w_{nx}\text{d}x=\beta(w_{n}(\overline{x},t)-w_{n}(\underline{x},t))$. By \eqref{total}, \eqref{+28}, Lemma~\ref{Proposition: G_n}, and the a.e. convergence of $\delta_n$ to $\delta$, we have that $G_{w_n}(t)$ is uniformly bounded in $n$ on $\overline{Q}_{T}$. Then {}{$m_n,M_n,\overline{M}_n$ in \eqref{+34'''}} are all independent of $n$. Therefore {}{\eqref{44a} holds. \eqref{44b} is a consequence} of the uniform boundedness of $G_{w_n}(t)$ and \cite[Theorem 5.4, 7.2-7.4, Chap. V]{Ladyzenskaja:1968}.
\end{proof}

{{}
\subsection{Proof of Theorem~\ref{Proposition 9}}\label{App: uniqueness}
}
Note that as \eqref{eq: foced FPE OFF} is linear in $v$, it suffices to prove that $w_1=w_2$ in $Q_T$. Letting $\widetilde{w}=w_1-w_2$, it follows
\begin{align*}
 &\dfrac{\partial \widetilde{w}}{\partial t}= \dfrac{\partial}{\partial x}\bigg( \beta\dfrac{\partial \widetilde{w}}{\partial x}- \alpha_{1}(x)\widetilde{w}-\widetilde{w}G_{w_2}(t) - w_1(G_{w_1}(t)-G_{w_2}(t) )\bigg),\\
&\beta\dfrac{\partial \widetilde{w}}{\partial x}(\underline{x},t)- \alpha_{1}(\underline{x})\widetilde{w}(\underline{x},t)-\widetilde{w}(\underline{x},t)G_{w_2}(t) - w_1(\underline{x},t)(G_{w_1}(t)-G_{w_2}(t) )=0,\\
 &\beta\dfrac{\partial \widetilde{w}}{\partial x}(\overline{x},t)- \alpha_{1}(\overline{x})\widetilde{w}(\overline{x},t)-\widetilde{w}(\overline{x},t)G_{w_2}(t) - w_1(\overline{x},t)(G_{w_1}(t)-G_{w_2}(t) )=0,\\
 &\widetilde{w}(x,0)=0.
\end{align*}
Then we have
\begin{align}
 \dfrac{1}{2}\frac{\text{d}}{\text{d}t}\|\widetilde{w}\|^2=&-\int_{\underline{x}}^{\overline{x}}\dfrac{\partial \widetilde{w}}{\partial x}\bigg( \beta\dfrac{\partial \widetilde{w}}{\partial x}- \alpha_{1}(x)\widetilde{w}-\widetilde{w}G_{w_2}(t) \bigg)\text{d}x+\int_{\underline{x}}^{\overline{x}}\dfrac{\partial \widetilde{w}}{\partial x}w_1(G_{w_1}(t)-G_{w_2}(t) ) \text{d}x.\label{+47}
\end{align}
Note that there exist positive constants $C_0$ and $C_1$ such that
\begin{align}
|\alpha_1(x)+G_{w_2}(t)|\leq C_0, \forall (x,t)\in \overline{Q}_{T},\label{+48}
\end{align}
and
\begin{align}
\bigg|G_{w_1}(t)-G_{w_2}(t)\bigg|=\frac{\bigg|\int_{\underline{x}}^{\overline{x}}\big(\alpha_1(x) +{}{\frac{k_0}{a}}(ax+b)\big)\widetilde{w}\text{d}x\bigg|}{\bigg|\int_{\underline{x}}^{\overline{x}}w_2\text{d}x\bigg|}\leq C_1\|\widetilde{w}\|_1,\label{+49}
\end{align}
where we used $\widetilde{w}(\overline{x},t)=\widetilde{w}(\underline{x},t)$, H\"{o}lder's inequality, and the fact that by Proposition~\ref{prop. 8}
\begin{align*}
\bigg|\int_{\underline{x}}^{\overline{x}}w_1\text{d}x\bigg|=\bigg|\int_{\underline{x}}^{\overline{x}}w_2\text{d}x\bigg|
=\bigg| \int_{\underline{x}}^{\overline{x}}w^0(x)\text{d}x +\int_{0}^t\int_{\underline{x}}^{\overline{x}}{}{\delta(x,s)}\text{d}x{}{\text{d}s}\bigg|
\geq \delta_0.
\end{align*}
By \eqref{+47}, \eqref{+48}, \eqref{+49}, H\"{o}lder's inequality, and Young's inequality, we have
\begin{align*}
\dfrac{1}{2}\frac{\text{d}}{\text{d}t}\|\widetilde{w}\|^2\leq&-\beta\|\widetilde{w}_x\|^2+\frac{\varepsilon}{2}\|\widetilde{w}_x\|^2+\frac{\varepsilon}{2}\|\widetilde{w}_x\|^2
+\frac{C_0^2}{2\varepsilon}\|\widetilde{w}\|^2+\frac{\varepsilon}{2}\|\widetilde{w}_x\|^2+\frac{1}{2\varepsilon}\|w_1\|^2 \cdot C_1^2 \|\widetilde{w}\|_1^2\\
\leq & -\left(\beta-\frac{3\varepsilon}{2}\right)\|\widetilde{w}_x\|^2+C_2\|\widetilde{w}\|^2,
\end{align*}
where $\varepsilon>0$, $C_2$ is a positive constant depending only on $\overline{x},\underline{x},\|w_1\|,C_0,C_1$, and $\varepsilon $.
Choosing $\varepsilon=\frac{2}{3}\beta$, it follows
\begin{align*}
\frac{\text{d}}{\text{d}t}\|\widetilde{w}\|^2\leq C_3\|\widetilde{w}\|^2,
\end{align*}
where $C_3$ is a positive constant.
By Gronwall's inequality, we have
\begin{align*}
\|\widetilde{w}(\cdot,t)\|^2\leq \|\widetilde{w}(\cdot,0)\|^2\cdot \text{e}^{C_3t}=0,
\end{align*}
based on which and by the continuity of $\widetilde{w}$, it yields $\widetilde{w}\equiv 0$ on $\overline{Q}_{T}$.


\end{document}